\documentclass[10pt,twocolumn]{article}

\usepackage[utf8]{inputenc}
\usepackage{amsmath}
\usepackage{amssymb}
\usepackage{amsthm}
\usepackage{titlesec}
\usepackage{cite}
\usepackage{color}
\usepackage{booktabs}
\usepackage{graphicx}
\usepackage{nicefrac}
\usepackage[colorlinks=false]{hyperref}
\usepackage[format=plain,labelfont=it]{caption}
\usepackage[left=1.5cm,right=1.5cm,top=2cm,bottom=2cm]{geometry}
\usepackage{algpseudocode}
\usepackage{csquotes}
\usepackage[font=footnotesize]{caption}
\usepackage{subcaption}

\usepackage{tikz}
\usetikzlibrary{arrows,positioning,patterns,decorations.pathreplacing}
\usepackage{tikz-3dplot}
\usepackage{pgfplots}
\usepgfplotslibrary{patchplots}
\usetikzlibrary{arrows}
\usetikzlibrary{decorations.pathmorphing}
\usetikzlibrary{decorations.shapes}
\usetikzlibrary{decorations.text}
\usetikzlibrary{positioning, shapes}

\renewcommand{\thesubsubsection}{\thesubsubsection.\arabic{subsubsection}}

\titleformat{\section}{\centering\normalfont\scshape}{\Roman{section}.}{5pt}{}
\titleformat{\subsection}{\normalfont\it}{\Alph{subsection}.}{5pt}{}
\titleformat{\subsubsection}[runin]{\normalfont\it}{\arabic{subsubsection})}{5pt}{}

\newcommand\infoFootnote[1]{%
  \begingroup
  \renewcommand\thefootnote{}\footnote{#1}%
  \addtocounter{footnote}{-1}%
  \endgroup}

\newtheorem{thm}{Theorem}
\newtheorem{cor}[thm]{Corollary}
\newtheorem{lem}[thm]{Lemma}

\newtheorem{assum}{Assumption}

\newtheorem{rem}{Remark}

\newcommand{\N}{\mathbb{N}} 

\newcommand{\R}{\mathbb{R}}

\newcommand{\Fc}{\mathcal{F}}

\newcommand{\Nc}{\mathcal{N}}

\newcommand{\Pc}{\mathcal{P}} 
\newcommand{\Qc}{\mathcal{Q}} 
\newcommand{\Rc}{\mathcal{R}} 
\newcommand{\Sc}{\mathcal{S}} 

\newcommand{\Uc}{\mathcal{U}}

\newcommand{\Xc}{\mathcal{X}}

\newcommand{\Zc}{\mathcal{Z}}

\newcommand{\Eb}{\boldsymbol{E}}
\newcommand{\Fb}{\boldsymbol{F}}
\newcommand{\Gb}{\boldsymbol{G}}
\newcommand{\Hb}{\boldsymbol{H}}
\newcommand{\Ib}{\boldsymbol{I}}

\newcommand{\Mb}{\boldsymbol{M}}
\newcommand{\Nb}{\boldsymbol{N}}

\newcommand{\Qb}{\boldsymbol{Q}}
\newcommand{\Rb}{\boldsymbol{R}}

\newcommand{\Wb}{\boldsymbol{W}}

\newcommand{\bb}{\boldsymbol{b}}

\newcommand{\db}{\boldsymbol{d}}

\newcommand{\fb}{\boldsymbol{f}}
\newcommand{\gb}{\boldsymbol{g}}

\renewcommand{\sb}{\boldsymbol{s}}

\newcommand{\ub}{\boldsymbol{u}}
\newcommand{\vb}{\boldsymbol{v}}

\newcommand{\xb}{\boldsymbol{x}}
\newcommand{\yb}{\boldsymbol{y}}
\newcommand{\zb}{\boldsymbol{z}}

\newcommand{\oneb}{\boldsymbol{1}}
\newcommand{\zerob}{\boldsymbol{0}}

\newcommand{\xib}{\boldsymbol{\xi}}

\newcommand{\varphib}{\boldsymbol{\varphi}}

\newcommand{\mub}{\boldsymbol{\mu}}

\newcommand{\Phib}{\boldsymbol{\Phi}}

\newcommand{\thetab}{\boldsymbol{\theta}}

\newcommand{\norm}[1]{\left\lVert#1\right\rVert}

\newcommand{\interior}{\mathrm{int}}

\DeclareMathAlphabet\mathbfcal{OMS}{cmsy}{b}{n}

\title{\vspace{-2mm}\bf Towards learning-enhanced solvers for multi-parametric optimization$^\ast$}
\author{Dieter Teichrib, Fabian K\"onig, and Moritz Schulze Darup\vspace{2mm}}
\date{}

\begin{document}

\maketitle

\textbf{\textit{Abstract}.} {\bf Neural networks (NNs) have been employed in various ways to accelerate the solution of multi-parametric optimization problems (OPs), which prominently arise in model predictive control (MPC). Early approaches focused on approximating the parametric solution (e.g., the optimal control law) directly by NNs. More recently, learning-based techniques have been used to emulate or enhance solver iterations. For the direct approximation of the parametric solution, it has been demonstrated that exploiting structural properties of the underlying OP facilitates the design of suitable NN architectures. In this paper, we show that structural analysis can also be leveraged to systematically integrate learning into classical optimization algorithms, thereby enhancing their performance. 

We formalize our concept for the case of multi-parametric quadratic programs (mpQPs), which frequently arise in linear-quadratic MPC, as a representative application.
More precisely, we demonstrate that tailored NNs enable a parameter-dependent fine-tuning of the step sizes in a standard dual projected gradient scheme. The design of the NNs is guided by the observation that optimal step sizes, which permit obtaining the exact optimal solution in finitely many iterations, are piecewise constant functions over the parameter space (e.g., the state space in MPC). We establish key theoretical properties of the proposed approach and illustrate its effectiveness in a numerical case study.}
\infoFootnote{All authors are with the \href{https://rcs.mb.tu-dortmund.de/}{Control and~Cyber-physical Systems Group}, Faculty of Mechanical Engineering, TU Dortmund University, Germany. E-mails:  \href{mailto:dieter.teichrib@tu-dortmund.de}{\{dieter.teichrib}, fabian2.koenig, moritz.schulzedarup\}@tu-dortmund.de. 

This work was supported by the Deutsche Forschungsgemeinschaft (DFG, German Research Foundation) under Grant 556837821.\vspace{0.5mm}}
\infoFootnote{\hspace{-1.5mm}$^\ast$This paper is a \textbf{preprint} of a contribution to the 65th IEEE Conference on Decision and Control 2026.}

\section{Introduction}

The computational complexity of multi-parametric optimization problems (OPs) remains a key challenge in applications that require real-time solutions. As is the case in, e.g., model predictive control (MPC) \cite{Rawlings2017}, where a multi-parametric OP, with the system state as a parameter, has to be solved at every time step within the system's sampling period to obtain the control input. To mitigate the challenges associated with the online solution of multi-parametric OPs, several approaches have been proposed. A non-machine-learning-based method is to solve the multi-parametric OP offline for all feasible parameter values and store the associated solutions. For cases where the multi-parametric OP is a multi-parametric quadratic program (mpQP), it is known that the resulting explicit optimizer is a piecewise affine (PWA) function of the parameter defined over polyhedral sets \cite{Bemporad2002}. But even evaluating the explicit PWA optimizer may be intractable in some cases, since the number of polyhedral sets grows in the worst case exponentially with the number of constraints in the OP \cite{Bemporad2002}.     

Therefore, neural networks (NNs) have been applied in various ways to accelerate the solution of multi-parametric OPs. The earliest applications of NNs in this context were primarily focused on directly approximating the explicit solution, i.e., training NNs to represent explicit optimizers \cite{Chen2018,Karg2020,Fabiani2022,Teichrib2024CDC} or optimal value functions (OVFs) \cite{Bhardwaj2020,Zhong2013,Chen2021}. More recently, learning-based techniques that aim to emulate or enhance solver iterations \cite{Ichnowski2021,Luken2024}, often referred to as "Learn-to-Optimize" (L2O) frameworks \cite{Chen2022,kotary2024,Luken2025}, were introduced. Now, an insight from the research on the direct approximation of solutions for multi-parametric OPs is that the efficacy of these approximations can be significantly enhanced when the NN topology is designed to exploit structural properties of the underlying OP. More specifically, the PWA and PWQ structure of the optimal control law and the OVF, respectively, in MPC for linear systems with quadratic costs have been used to derive NN topologies that yield low-error approximations \cite{Karg2020,Teichrib2023,Teichrib2025OJCSYS}. 

In this paper, we demonstrate that a rigorous analysis of classical optimization algorithms can also be used to systematically integrate structural knowledge into NNs that are used to approximate solver iterations. We formalize this concept for the case of mpQPs. Specifically, we show that tailored NNs can be used to enable parameter-dependent fine-tuning of step sizes within a standard dual projected gradient scheme. Our approach builds on the following results that we derived from a theoretical analysis of dual projected gradient solvers for mpQPs. First, we show that there exist optimal step sizes that, in principle, permit obtaining the exact optimal solution in a finite number of iterations (Theorem~\ref{thm:exact_solution} and Theorem~\ref{thm:centralResult}). Moreover, we prove that these optimal step sizes are piecewise constant (PWC) functions over the parameter space (e.g., the state space in the context of MPC). By designing NNs that structurally match this PWC mapping, we enhance the performance of the iterative dual projected gradient solver compared to a solver with a parameter-independent fixed step size. We first state the key theoretical results for this approach and demonstrate its effectiveness through a numerical case study.

The paper is organized as follows. In the remainder of this section, we state some basic notation. In Section~\ref{sec:Preliminaries}, we briefly summarize relevant background on mpQPs, dual projected gradient solver, optimal solutions derived from the Karush-Kuhn-Tucker (KKT) conditions, and NNs. In Section~\ref{sec:opt_steps}, we introduce some preliminary results that allow us to derive optimal step sizes for a dual projected gradient solver that exactly solves mpQPs under certain technical assumptions. Subsequently, we show in Section~\ref{sec:validityAssumptions} that it is possible to satisfy these assumptions for almost all feasible parameters of the mpQP.
Finally, in Section~\ref{sec:NNs_for_DPGD}, we combine all theoretical results to derive a learning-enhanced dual projected gradient solver. Moreover, we illustrate our method and evaluate its performance in Section \ref{sec:NNs_for_DPGD:A}.

\subsection{Notation}

The matrix $\Ib\in\R^{n \times n}$ is an identity matrix, where the dimension $n$ is clear from the context. Throughout the paper, we use the notation $\fb^i$ and $\mub^i$ for functions and matrices, respectively, to differentiate between different instances of the variables. For a polyhedral set $\Pc:=\{\xb\in\R^n \ | \ \Hb^\Pc\xb\leq\bb^\Pc\}$ we define the interior of $\Pc$ as $\interior(\Pc):=\{\xb\in\R^n \ | \ \Hb^\Pc\xb<\bb^\Pc\}$.

\section{Preliminaries and background}\label{sec:Preliminaries}

\subsection{Multi-parametric quadratic programs and their duals}

We investigate multi-parametric quadratic programs (QPs) of the form
\begin{equation}
\label{eq:mpQP_primal}
    \min_{\zb}\, \frac{1}{2}\zb^\top\Hb\zb + \xb^{\top}\Fb^{\top} \zb \quad \text{s.t.} \quad \Gb \zb \leq \Eb\xb + \db, 
\end{equation}
which arise, for instance, in MPC. In~\eqref{eq:mpQP_primal}, $\zb \in \R^l$ denotes the decision variable and $\xb\in \R^n$ is a parameter (e.g., the system state in MPC). The matrix $\Hb \in \R^{l \times l}$ is assumed to be positive definite, and $\Gb \in \R^{q \times l}$ is associated with $q \in \N$ constraints. We denote the set of parameters $\xb$, for which \eqref{eq:mpQP_primal} is feasible, with $\Fc$. 
For a given $\xb \in \Fc$, we seek to find (an approximation of) the optimizer $\zb^\ast(\xb)$.
One of the simplest schemes to obtain a numerical solution are projected gradient descent schemes. To avoid numerically demanding projections onto the primal constraint set $\Zc:=\{ \zb \in \R^l \,|\, \Gb \zb \leq \Eb\xb + \db \}$, one often considers the dual problem of \eqref{eq:mpQP_primal}, which can be formulated~as
\begin{equation}
\label{eq:mpQP_dual}
\min_{\mub \geq \zerob}  \,\frac{1}{2}  \mub^\top \Gb \Hb^{-1} \Gb^\top \mub +  \mub^\top ( \Gb \Hb^{-1}\Fb\xb  + \Eb\xb + \db) 
\end{equation}
after eliminating constant terms in the objective function. Notably, the constraint set $\Pc:=\{ \mub \in \R^q \,|\, \mub \geq \zerob \}$ in \eqref{eq:mpQP_dual} always refers to the positive orthant in $\R^q$.

\subsection{Dual projected gradient descent solver}\label{sec:Preliminaries:DPGD}

Since the projection of a $\mub \in \R^q$ onto $\Pc$ 
can be realized via $\max \{ \mub , \zerob \}$, implementing a projected gradient descent scheme for solving the dual~\eqref{eq:mpQP_dual} is straightforward. To formalize this, we introduce the matrices $\Mb:=\Gb \Hb^{-1} \Gb^\top$ and $\Nb:=\Gb \Hb^{-1} \Fb+\Eb$, and note that $\Mb \mub + \Nb \xb + \db$ refers to the gradient of the objective function in~\eqref{eq:mpQP_dual}. Hence, a dual projected gradient descent scheme is realized by the iterations
\begin{equation}
\label{eq:solver_iteration}
\mub^{i+1}:= \max \left\{\mub^{i} - \alpha_i \left( \Mb \mub^{i} + \Nb \xb + \db \right), \zerob \right\}, 
\end{equation}
where $\alpha_i$ denotes the step size during iteration $i\in \N$. Remarkably, the iterates $\mub^{i+1}$ depend on the given parameter~$\xb$, but we omit explicitly indicating this dependence throughout the paper for the sake of notational simplicity. Now, the scheme is typically initialized with $\mub^0:=\zerob$, exploiting the fact that Lagrange multipliers associated with inactive (or weakly active) constraints are equal to zero. Moreover, in its simplest form, the step size $\alpha_i$ is kept constant throughout the iterations. The standard (and optimal) choice then is $\alpha_i=\alpha_{\text{std}}:=1/\lambda_{\max}(\Mb)$ \cite{Boyd2004}, where $\lambda_{\max}(\Mb)$ denotes the largest eigenvalue of the positive semi-definite matrix $\Mb$. In fact, this choice guarantees convergence to an\footnote{Notably, while $\zb^\ast(\xb)$ is unique, there may exist multiple $\mub^\ast(\xb)$ for some $\xb$. We comment on this special (and rare) case where needed.} optimizer $\mub^\ast(\xb)$ for every $\xb \in \Fc$.
We intend to provide improved step sizes $\alpha_i(\xb)$ with the help of tailored NNs.

\subsection{KKT conditions and optimal solutions}

In order to specify tailored NNs, it turns out to be helpful to investigate optimal solutions to \eqref{eq:mpQP_primal} and \eqref{eq:mpQP_dual}. Clearly, these are characterized by the well-known Karush-Kuhn-Tucker (KKT) conditions 
\begin{subequations}
\label{eq:KKT}
\begin{align}
    \Hb \zb^\ast(\xb) + \Fb \xb +\Gb^\top \mub^\ast(\xb) & = \zerob,\\
    \Gb \zb^\ast(\xb) - \Eb\xb - \db &\leq \zerob, \\
    \mub^\ast(\xb) & \geq \zerob, \\
    \mathrm{diag}\left(\mub^\ast(\xb) \right) (  \Gb \zb^\ast(\xb) - \Eb\xb - \db) &= \zerob,
\end{align}
\end{subequations}
where $\mathrm{diag}(\mub)$ refers to a diagonal matrix with the elements of $\mub$ on its diagonal.
A special solution arises for $\xb$, for which the ``unconstrained solution'' $\zb^\ast(\xb)=-\Hb^{-1} \Fb \xb$ satisfies the constraint in  \eqref{eq:mpQP_primal}. This applies for all $\xb$ that satisfy $\Gb\zb^\ast(\xb)\leq\Eb\xb+\db$, i.e., for all $\xb$ in the set
$$
\Rc_0 := \{ \xb \in \R^n \,|\, \zerob \leq \Nb x + \db \}.
$$
The special solution is completed by $\mub^\ast(\xb)=\zerob$ as easily confirmed by the KKT conditions.
For the remaining $\xb \in {\Fc \setminus \Rc_0}$, optimal solutions are characterized by (strongly) active constraints.
In fact, for every $\xb \in {\Fc \setminus \Rc_0}$ and related optimizer $\mub^\ast(\xb)$, there exists a set\footnote{Notably, in case there exist multiple $\mub^\ast(\xb)$, there exists a set $\Sc$ satisfying \eqref{eq:conditionsOnS} and \eqref{eq:zLambdaOptS} for every optimizer $\mub^\ast(\xb)$.}
\begin{equation}
\label{eq:conditionsOnS}
    \Sc \subseteq \Qc:=\{1,\dots,q\} \quad \text{with} \quad 1 \leq |\Sc| = \mathrm{rank}(\Gb_\Sc)
\end{equation}
such that 
\begin{equation}
\label{eq:zLambdaOptS}
 \begin{pmatrix}
    \zb^\ast(\xb) \\
    \mub_\Sc^\ast(\xb)
\end{pmatrix}= \begin{pmatrix}
    \Hb & \Gb_\Sc^\top \\
    \Gb_\Sc & \zerob
\end{pmatrix}^{-1} \begin{pmatrix}
    -\Fb \xb \\
    \Eb_\Sc \xb + \db_\Sc 
\end{pmatrix}
\end{equation}
and $\mub^\ast_{\Qc \setminus \Sc}(\xb)=\zerob$, where $\Gb_{\Sc}$ (and analogously $\Eb_\Sc$ etc.) refers to the submatrix of $\Gb$ resulting from deleting those rows of $\Gb$ that are not contained in $\Sc$. Furthermore, $\Gb_\Sc^\top$ is short-hand notation for $(\Gb_\Sc)^\top$. Remarkably, due to $\textrm{rank}(\Gb_\Sc)=|\Sc|$, the inverse in~\eqref{eq:zLambdaOptS} exists and evaluates to
$$
\begin{pmatrix}
    \Hb^{-1} + \Hb^{-1} \Gb_\Sc^\top  \Mb_{\Sc,\Sc}^{-1} \Gb_\Sc \Hb^{-1} & \Hb^{-1} \Gb_\Sc^\top  \Mb_{\Sc,\Sc}^{-1} \\
    \Mb_{\Sc,\Sc}^{-1}  \Gb_\Sc \Hb^{-1} & -\Mb_{\Sc,\Sc}^{-1} 
\end{pmatrix},
$$
where $\Mb_{\Sc_1,\Sc_2}$ refers to the submatrix of $\Mb$ resulting from deleting those rows respectively columns of $\Mb$ that are not contained in $\Sc_1$ respectively $\Sc_2$. Further, $ \Mb_{\Sc,\Sc}^{-1}$ is short-hand notation for $(\Mb_{\Sc,\Sc})^{-1}$.
Hence, we find 
\begin{subequations}
\label{eq:optimizers}
\begin{align}
\label{eq:lambdaOpt}
 \mub_\Sc^\ast(\xb) &=-\Mb_{\Sc,\Sc}^{-1} \left( \Nb_\Sc  \xb + \db_\Sc \right) \qquad \text{and} \\  
 \label{eq:zOpt}
 \zb^\ast(\xb)&=-\Hb^{-1} \big(\Fb \xb + \Gb_\Sc^\top \mub_\Sc^\ast(\xb)\big).
\end{align}
\end{subequations}
Remarkably, we can also reverse the perspective and, for a given set $\Sc$ as in~\eqref{eq:conditionsOnS}, investigate for which $\xb$ the right-hand sides in~\eqref{eq:optimizers} reflect optimizers. To formalize this approach, we introduce the solution candidates
\begin{equation}
\label{eq:muOptHat}
    \hat{\mub}^\ast_\Sc(\xb) :=-\Mb_{\Sc,\Sc}^{-1} \left( \Nb_\Sc  \xb + \db_\Sc \right)
\,\,\,\text{with} \,\,\, \hat{\mub}^\ast_{\Sc^c}(\xb) :=\zerob
\end{equation} 
and, analogously, $\hat{\zb}^\ast(\xb)$. 
According to the KKT conditions \eqref{eq:KKT}, the specified solution candidates are optimizers if $\hat{\mub}^\ast_\Sc(\xb) \geq \zerob$ and $\Gb_{\Sc^c} \hat{\zb}^\ast(\xb) \leq \Eb_{\Sc^c} \xb + \db_{\Sc^c}$, where $\Sc^c:=\Qc \setminus \Sc$. Hence, a set $\Sc$ applies for all $\xb$ in
\begin{align}
\nonumber
\!\Rc_{\Sc}:= \{ \xb \in \R^n \,|\,  &\hat{\mub}^\ast_\Sc(\xb) \geq \zerob, \\
\label{eq:setRS}
& \Mb_{\Sc^c\!,\Sc} \, \hat{\mub}^\ast_\Sc(\xb) + \Nb_{\Sc^c} \xb + \db_{\Sc^c} \geq \zerob \},\!
\end{align}
where we note that $\Rc_\Sc$ may be empty for some $\Sc$. Notably, the union of all such $\Rc_\Sc$ and $\Rc_0$ is identical to $\Fc$ \cite{Bemporad2002}.

\subsection{Neural networks}

We aim to learn parameter-dependent step sizes $\alpha_i(\xb)$ for \eqref{eq:solver_iteration} by deep feed-forward-NNs, i.e., we consider NNs of the form $\Phib(\xib)=\fb^{l+1}\circ \gb^{l}\circ \fb^{l}\circ \dots \circ \gb^{1}\circ \fb^{1}(\xib)$ with $l \in\N$ hidden layers and $w_i$ neurons in layer $i$. The functions $\fb^{i}: \R^{w_{i-1}} \rightarrow \R^{w_i}$ for $i \in \{1,\dots,l\}$ refer to preactivations and $\gb^{i}: \R^{p_i w_i} \rightarrow \R^{w_i}$ to activation functions. Finally, $\fb^{l+1}: \R^{w_{l}} \rightarrow \R^{w_{l+1}}$ reflects a postactivation. The functions $\fb^{i}$ are typically affine, i.e., $\fb^{i}(\yb^{i-1})=\Wb^{i}\yb^{i-1}+\vb^{i}$, where $\Wb^{i}\in\R^{w_i\times w_{i-1}}$ is a weighting matrix, $\vb^{i} \in \R^{w_i}$ is a bias vector, and $\yb^{i-1}$ denotes the output of the previous layer with $\yb^{0}:=\xib$. We will summarize the parameters of an NN by $\thetab$ and occasionally use the notation $\Phib(\xib,\thetab)$ when we explicitly refer to the dependence of the NN on the parameters. The activation function is typically nonlinear, where common choices are sigmoid, hyperbolic tangent, and rectified linear unit (ReLU). Due to reasons we discuss in the sections \ref{sec:opt_steps} and \ref{sec:validityAssumptions} we will focus here on the sigmoid and ReLU activation. The sigmoid activation function is defined as
\begin{equation}
    \label{eq:sig}
    \gb_{\text{sig}}^{i}(\zb^{i})=\sigma\left(\zb^{i}\right):=\begin{pmatrix}
        \frac{1}{1+e^{-\zb_1^{i}}} \\
        \vdots \\
        \frac{1}{1+e^{-\zb_{w_i}^{i}}}
    \end{pmatrix}
\end{equation}
and the ReLU activation function as
\begin{equation}
    \label{eq:ReLU}
    \gb_{\text{relu}}^{i}(\zb^{i})=\max\left\{\zerob,\zb^{i}\right\}:=\begin{pmatrix}
        \max\big\{0,\zb_1^{i}\big\} \\
        \vdots \\
        \max\big\{0,\zb_{w_i}^{i}\big\}
    \end{pmatrix}.
\end{equation}
We will refer to the resulting NNs as a sigmoid NN and ReLU NN, respectively.   

\section{Optimal parameter-dependent step sizes}\label{sec:opt_steps}

We next investigate optimal step sizes $\alpha_i$ depending on the parameter $\xb$ and aim to identify a pattern, where we assume $\mub^0:=\zerob$ throughout the remaining paper. We start by noting that, for every $\xb \in \Rc_0$, any choice of $\alpha_i>0$ leads to optimal iterates $\mub^i = \zerob$. In fact, given $\mub^i=\zerob$, the iterations~\eqref{eq:solver_iteration} simplify to
\begin{equation}
    \mub^{i+1} =  \max \left\{ - \alpha_i \left(  \Nb \xb + \db \right), \zerob \right\} = \zerob 
\end{equation}
due to $\Nb \xb + \db \geq \zerob$. The claim then follows by induction, starting with $\mub^0:=\zerob$.

Next, we investigate parameters $\xb$ in the sets $\Rc_\Sc$ and begin by making the following assumptions, which will be justified in Section~\ref{sec:validityAssumptions}.

\begin{assum}
\label{assum:iteration_iStar}
    Consider a set $\Sc$ as in~\eqref{eq:conditionsOnS} and assume that the corresponding set $\Rc_\Sc$ as in \eqref{eq:setRS} is non-empty. Consider any $\xb \in \Rc_\Sc$ and assume that, after $i^\ast>0$ iterations~\eqref{eq:solver_iteration} with the step size $\alpha_{\text{std}}$, the iterate $\mub^{i^\ast}$ satisfies $\mub^{i^\ast}_{\Sc^c} = \zerob$. 
\end{assum}

\begin{assum}
\label{assum:lastSteps}
Let Assumption~1 hold and additionally suppose that the $|\Sc|$ iterations~\eqref{eq:solver_iteration} following iteration $i^\ast$ satisfy $\mub^{i}_{\Sc^c} = \zerob$ and $\mub^{i}_{\Sc} \geq \zerob$ for all $i \in {i^\ast+1,\dots,i^\ast+|\Sc|}$.
\end{assum}

Based on these assumptions, we can specify the iterations~\eqref{eq:solver_iteration} and deduce the following results.

\begin{lem}
\label{lem:simplifiedIterations}
Let Assumption~\ref{assum:lastSteps} hold. Then, the iterates $\mub^i$ satisfy
\begin{equation}
\label{eq:lambdaSi}
 \mub_\Sc^{i+1}  = \left(\Ib - \alpha_i \Mb_{\Sc,\Sc} \right)\mub_\Sc^{i} - \alpha_i \left( \Nb_\Sc \xb + \db_\Sc \right)  
\end{equation}
for every $i \in \{i^\ast,\dots,i^\ast+|\Sc|-1\}$.
\end{lem}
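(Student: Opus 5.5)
The plan is a direct computation: restrict the update \eqref{eq:solver_iteration} to the components indexed by $\Sc$, then remove the projection using Assumption~\ref{assum:lastSteps}. Fix $i \in \{i^\ast,\dots,i^\ast+|\Sc|-1\}$. Assumption~\ref{assum:iteration_iStar} (for $i=i^\ast$) and Assumption~\ref{assum:lastSteps} (for $i>i^\ast$) give $\mub^i_{\Sc^c}=\zerob$. Permuting coordinates so that $\mub=(\mub_\Sc,\mub_{\Sc^c})$, the block structure of $\Mb$ then yields
\[
(\Mb\mub^i)_\Sc = \Mb_{\Sc,\Sc}\mub^i_\Sc + \Mb_{\Sc,\Sc^c}\mub^i_{\Sc^c} = \Mb_{\Sc,\Sc}\mub^i_\Sc .
\]
Because the maximum in \eqref{eq:solver_iteration} acts componentwise, the $\Sc$-rows of the update are
\[
\mub^{i+1}_\Sc = \max\left\{ (\Ib-\alpha_i\Mb_{\Sc,\Sc})\mub^i_\Sc - \alpha_i(\Nb_\Sc\xb+\db_\Sc), \zerob\right\}.
\]

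The remaining task is to drop the max, and this is the only step that needs care. Write $\vb := (\Ib-\alpha_i\Mb_{\Sc,\Sc})\mub^i_\Sc - \alpha_i(\Nb_\Sc\xb+\db_\Sc)$ for the pre-projection vector. Note that $i+1\in\{i^\ast+1,\dots,i^\ast+|\Sc|\}$, so Assumption~\ref{assum:lastSteps} gives $\mub^{i+1}_\Sc\geq\zerob$. However, $\max\{\vb,\zerob\}\geq\zerob$ holds trivially, so this alone does not imply $\vb=\mub^{i+1}_\Sc$. The equality in \eqref{eq:lambdaSi} holds exactly when $\vb\geq\zerob$, i.e., when no clipping occurs on $\Sc$.

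I would therefore read the condition $\mub^i_\Sc\geq\zerob$ in Assumption~\ref{assum:lastSteps} as referring to the unprojected iterate. Equivalently, the assumption should be understood as requiring that the projection is inactive on $\Sc$ during those iterations. Under this reading, $\vb\geq\zerob$, the max is the identity on $\vb$, and \eqref{eq:lambdaSi} follows at once for each such $i$; no induction is needed beyond using the assumption at each index. In writing the proof I would state this interpretation explicitly, since it is the main obstacle and the only point where the argument is not purely mechanical.

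The $\Sc^c$-components need no treatment, because they are zero by assumption and do not enter \eqref{eq:lambdaSi}.
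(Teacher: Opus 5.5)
Your proposal is correct and follows the same route as the paper, whose proof only says that the claim follows from Assumption~\ref{assum:lastSteps} together with \eqref{eq:solver_iteration}: you use $\mub^i_{\Sc^c}=\zerob$ (from Assumption~\ref{assum:iteration_iStar} when $i=i^\ast$) to remove the $\Mb_{\Sc,\Sc^c}$ block and then drop the componentwise max on $\Sc$. Your remark that $\mub^{i}_\Sc\geq\zerob$ holds trivially for projected iterates, so the assumption must be read as nonnegativity of the pre-projection vector, is accurate, and it matches how the paper itself uses the assumption in Lemma~\ref{lem:conditionsForAssum2} and in \eqref{eq:iMinusS_Condition}.
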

\begin{proof}
The claim follows directly from Assumption~\ref{assum:lastSteps} in combination with the iteration scheme in~\eqref{eq:solver_iteration}.
\end{proof}

\begin{lem}
Let Assumption~\ref{assum:lastSteps} hold. Then, we have
\begin{equation}
\label{eq:lambdaSiLambdaSOpt}
\mub^{i+1}_\Sc-\hat{\mub}_\Sc^\ast(\xb) =\left(\Ib-\alpha_i \Mb_{\Sc,\Sc}\right)(\mub_\Sc^{i}-\hat{\mub}_\Sc^\ast(\xb))
\end{equation}
for every $i \in \{i^\ast,\dots,i^\ast+|\Sc|-1\}$.
\end{lem}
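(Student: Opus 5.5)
The plan is to start from the simplified iteration in Lemma~\ref{lem:simplifiedIterations} and subtract the candidate $\hat{\mub}_\Sc^\ast(\xb)$ from both sides. Only one identity is needed: by the definition~\eqref{eq:muOptHat} of the solution candidate, $\hat{\mub}_\Sc^\ast(\xb)=-\Mb_{\Sc,\Sc}^{-1}(\Nb_\Sc\xb+\db_\Sc)$. The inverse exists because $\Mb_{\Sc,\Sc}=\Gb_\Sc\Hb^{-1}\Gb_\Sc^\top$ is positive definite, by $\mathrm{rank}(\Gb_\Sc)=|\Sc|$ from~\eqref{eq:conditionsOnS} and $\Hb\succ 0$. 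Hence
\begin{equation*}
\Nb_\Sc\xb+\db_\Sc=-\Mb_{\Sc,\Sc}\,\hat{\mub}_\Sc^\ast(\xb).
\end{equation*}

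Substituting this into~\eqref{eq:lambdaSi} gives, for every $i\in\{i^\ast,\dots,i^\ast+|\Sc|-1\}$,
\begin{equation*}
\mub_\Sc^{i+1}=\left(\Ib-\alpha_i\Mb_{\Sc,\Sc}\right)\mub_\Sc^{i}+\alpha_i\Mb_{\Sc,\Sc}\,\hat{\mub}_\Sc^\ast(\xb).
\end{equation*}
Subtracting $\hat{\mub}_\Sc^\ast(\xb)$ from both sides and writing $-\hat{\mub}_\Sc^\ast(\xb)+\alpha_i\Mb_{\Sc,\Sc}\hat{\mub}_\Sc^\ast(\xb)=-(\Ib-\alpha_i\Mb_{\Sc,\Sc})\hat{\mub}_\Sc^\ast(\xb)$, the terms collect into $(\Ib-\alpha_i\Mb_{\Sc,\Sc})(\mub_\Sc^{i}-\hat{\mub}_\Sc^\ast(\xb))$. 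This is exactly~\eqref{eq:lambdaSiLambdaSOpt}.

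There is no real obstacle. The only point to check is that Lemma~\ref{lem:simplifiedIterations} holds on the full index range $\{i^\ast,\dots,i^\ast+|\Sc|-1\}$, which it does under Assumption~\ref{assum:lastSteps}, and that the invertibility of $\Mb_{\Sc,\Sc}$ is justified as above. The result holds for arbitrary step sizes $\alpha_i$, and it does not require $\xb\in\Rc_\Sc$ beyond what the assumptions already impose.
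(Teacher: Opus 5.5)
Your proof is correct and is the paper's argument: substitute $\Nb_\Sc\xb+\db_\Sc=-\Mb_{\Sc,\Sc}\hat{\mub}_\Sc^\ast(\xb)$ into \eqref{eq:lambdaSi} from Lemma~\ref{lem:simplifiedIterations} and collect terms. The paper cites \eqref{eq:lambdaOpt} for that identity, whereas you use the definition \eqref{eq:muOptHat} of the candidate. That is the same formula and is slightly more precise, since the lemma concerns $\hat{\mub}_\Sc^\ast(\xb)$.
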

\begin{proof}
The claim follows from~\eqref{eq:lambdaSi} in combination with \eqref{eq:lambdaOpt}.
\end{proof}

\begin{lem}
\label{lem:muiMuOpt}
Let Assumption~\ref{assum:lastSteps} hold. Then, we have
\begin{equation}
\label{eq:lambdaSjLambdaSOptExplicit}
\mub^{i+1}_\Sc-\hat{\mub}_\Sc^\ast(\xb) =\!\left(\prod\limits_{j=i^\ast}^{i}\!\!\left(\Ib-\alpha_j \Mb_{\Sc,\Sc}\right)\!\right)\! (\mub_\Sc^{i^\ast}-\hat{\mub}_\Sc^\ast(\xb))
\end{equation}
for every $i \in \{i^\ast,\dots,i^\ast+|\Sc|-1\}$.
\end{lem}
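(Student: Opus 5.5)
The plan is a simple induction on $i$ that unrolls the one-step error recursion \eqref{eq:lambdaSiLambdaSOpt} from the preceding lemma. Under Assumption~\ref{assum:lastSteps}, that lemma already gives, for every $i \in \{i^\ast,\dots,i^\ast+|\Sc|-1\}$,
\[
\mub^{i+1}_\Sc-\hat{\mub}_\Sc^\ast(\xb) =\left(\Ib-\alpha_i \Mb_{\Sc,\Sc}\right)(\mub_\Sc^{i}-\hat{\mub}_\Sc^\ast(\xb)).
\]
Here $\hat{\mub}_\Sc^\ast(\xb)$ is a fixed vector, since $\xb$ is fixed. So the error $\mub^{i}_\Sc-\hat{\mub}_\Sc^\ast(\xb)$ evolves linearly, and the claim is just the composition of these linear maps.

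\emph{Base case $i=i^\ast$.} The product $\prod_{j=i^\ast}^{i^\ast}(\Ib-\alpha_j\Mb_{\Sc,\Sc})$ consists of the single factor $\Ib-\alpha_{i^\ast}\Mb_{\Sc,\Sc}$. Hence \eqref{eq:lambdaSjLambdaSOptExplicit} coincides with \eqref{eq:lambdaSiLambdaSOpt} at $i=i^\ast$.

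\emph{Inductive step.} Suppose \eqref{eq:lambdaSjLambdaSOptExplicit} holds for some $i-1$ with $i^\ast \le i-1 < i \le i^\ast+|\Sc|-1$. Apply \eqref{eq:lambdaSiLambdaSOpt} at index $i$, which is admissible because $i$ lies in the stated range, and substitute the induction hypothesis for $\mub_\Sc^{i}-\hat{\mub}_\Sc^\ast(\xb)$. This gives
\[
\mub^{i+1}_\Sc-\hat{\mub}_\Sc^\ast(\xb)=\left(\Ib-\alpha_i \Mb_{\Sc,\Sc}\right)\left(\prod_{j=i^\ast}^{i-1}\left(\Ib-\alpha_j \Mb_{\Sc,\Sc}\right)\right)(\mub_\Sc^{i^\ast}-\hat{\mub}_\Sc^\ast(\xb)),
\]
which is the claimed formula for $i$.

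The only point that needs care is the ordering of the factors. The new factor enters on the left, so the product should be read as $(\Ib-\alpha_i\Mb_{\Sc,\Sc})\cdots(\Ib-\alpha_{i^\ast}\Mb_{\Sc,\Sc})$. In fact the ordering does not matter: every factor is a polynomial in the single matrix $\Mb_{\Sc,\Sc}$, so all factors commute and the product is unambiguous regardless of convention. I would state this commutativity explicitly, since it is also what later allows the step sizes to be chosen as inverse eigenvalues of $\Mb_{\Sc,\Sc}$ in any order. Otherwise there is no real obstacle. The range restriction on $i$ is exactly the range on which the preceding lemma, and hence Assumption~\ref{assum:lastSteps}, guarantees the recursion.
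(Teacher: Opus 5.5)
Your proof is correct and takes the same approach as the paper: its proof says only that the claim follows by recursively applying \eqref{eq:lambdaSiLambdaSOpt}, and your induction writes out that unrolling. Your remark that all factors are polynomials in $\Mb_{\Sc,\Sc}$, and therefore commute, usefully settles the ordering ambiguity in the product notation, which the paper does not address.
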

\begin{proof}
The claim follows by recursively using~\eqref{eq:lambdaSiLambdaSOpt}.
\end{proof}

In general, we have $\mub_\Sc^{i^\ast} \neq \hat{\mub}_\Sc^\ast(\xb)$. Therefore, in order to obtain the desired result $\mub_{\Sc}^{i^\ast+|\Sc|} = \hat{\mub}_\Sc^\ast(\xb)$, evaluating \eqref{eq:lambdaSjLambdaSOptExplicit} at $i := i^\ast + |\Sc| - 1$ shows that
\begin{equation}
\label{eq:thisShouldHold}
\prod\limits_{j=i^\ast}^{i^\ast+|\Sc|-1}\left(\Ib - \alpha_j \Mb_{\Sc,\Sc}\right) = \zerob
\end{equation}
must hold.
Fortunately, this can be readily achieved by appropriately choosing the step sizes $\alpha_i$ as formalized next.

\begin{lem}
\label{lem:choiceOfAlpha}
Let Assumption~\ref{assum:lastSteps} hold and let $\lambda_1,\dots,\lambda_{|\Sc|}$ denote the $|\Sc|$ eigenvalues of $\Mb_{\Sc,\Sc}$ in descending order. 
Then, \eqref{eq:thisShouldHold} holds for the choice
    \begin{equation}
\label{eq:alphaLambda}
\alpha_i:=\frac{1}{\lambda_{i-i^\ast+1}}
\end{equation}
for every $i \in \{i^\ast,\dots,i^\ast+|\Sc|-1\}$. 
\end{lem}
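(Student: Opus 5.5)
We prove the claim by diagonalizing $\Mb_{\Sc,\Sc}$ and showing that the factors of the product in \eqref{eq:thisShouldHold} then annihilate one eigendirection after another.

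\emph{Step 1: diagonalize.} Since $\Mb = \Gb\Hb^{-1}\Gb^\top$ and $\Hb$ is positive definite, the principal submatrix is
$$
\Mb_{\Sc,\Sc}=\Gb_\Sc\Hb^{-1}\Gb_\Sc^\top .
$$
It is symmetric. It is positive definite because $\mathrm{rank}(\Gb_\Sc)=|\Sc|$ by \eqref{eq:conditionsOnS}. Hence all eigenvalues satisfy $\lambda_1\geq\dots\geq\lambda_{|\Sc|}>0$, so the step sizes \eqref{eq:alphaLambda} are well defined. By the spectral theorem we write
$$
\Mb_{\Sc,\Sc}=\Qb\Lambda\Qb^\top,\qquad \Lambda=\mathrm{diag}(\lambda_1,\dots,\lambda_{|\Sc|}),
$$
with $\Qb$ orthogonal. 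Each factor then becomes
$$
\Ib-\alpha_j\Mb_{\Sc,\Sc}=\Qb(\Ib-\alpha_j\Lambda)\Qb^\top .
$$

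\emph{Step 2: evaluate the product.} All factors share the eigenbasis $\Qb$, so they commute. The product in \eqref{eq:thisShouldHold} therefore equals
$$
\Qb\left(\prod_{j}(\Ib-\alpha_j\Lambda)\right)\Qb^\top .
$$
The middle product is diagonal. Its $k$-th diagonal entry is $\prod_{j}(1-\alpha_j\lambda_k)$. The choice $\alpha_j=1/\lambda_{j-i^\ast+1}$ with $j=i^\ast+k-1$ makes the factor $1-\lambda_k/\lambda_k=0$ appear in that product. So every diagonal entry vanishes, and the whole product is $\zerob$.

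Repeated eigenvalues cause no difficulty. Each index $k$ still receives its own step $j=i^\ast+k-1$, so every eigenvalue is used as many times as its multiplicity.

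\emph{Main obstacle.} The only point needing care is justifying positive definiteness, so that no $\lambda_k$ is zero and the step sizes exist. This follows from the full row rank of $\Gb_\Sc$, which gives $\vb^\top\Gb_\Sc\Hb^{-1}\Gb_\Sc^\top\vb>0$ for every $\vb\neq\zerob$.

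Assumption~\ref{assum:lastSteps} is not needed for the matrix identity itself; it only ensures that the linear recursion of Lemma~\ref{lem:muiMuOpt} is valid, which is what links \eqref{eq:thisShouldHold} to exactness of $\mub^{i^\ast+|\Sc|}_\Sc$.
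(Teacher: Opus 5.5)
Your proof is correct, but it takes a different route from the paper. The paper applies the Cayley--Hamilton theorem, $\prod_{k=1}^{|\Sc|}(\Mb_{\Sc,\Sc}-\lambda_k\Ib)=\zerob$. Since $\Ib-\alpha_{i^\ast+k-1}\Mb_{\Sc,\Sc}=-\alpha_{i^\ast+k-1}(\Mb_{\Sc,\Sc}-\lambda_k\Ib)$, the product in \eqref{eq:thisShouldHold} is a scalar multiple of this vanishing matrix polynomial. Positive definiteness enters only to guarantee that the reciprocals $1/\lambda_k$ exist and are positive.

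You instead use the symmetry of $\Mb_{\Sc,\Sc}=\Gb_\Sc\Hb^{-1}\Gb_\Sc^\top$ and the spectral theorem. This reduces the product to a diagonal matrix whose $k$-th entry contains the zero factor $1-\lambda_k/\lambda_k$.

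The Cayley--Hamilton argument is shorter and more general. It works verbatim for any square matrix with nonzero real eigenvalues, diagonalizable or not, so it does not depend on the structure of $\Mb$. Your argument needs diagonalizability, which holds here, but it is more transparent in return:
\begin{itemize}
\item It shows that the step $1/\lambda_k$ annihilates exactly the component of the error $\mub_\Sc^{i^\ast}-\hat{\mub}_\Sc^\ast(\xb)$ along the $k$-th eigenvector of $\Mb_{\Sc,\Sc}$.
\item It shows that a repeated eigenvalue needs to be used only once. The number of corrective steps could therefore be reduced to the number of distinct eigenvalues, which Cayley--Hamilton alone does not reveal; one would need the minimal polynomial.
\end{itemize}
Your remark that Assumption~\ref{assum:lastSteps} plays no role in the matrix identity itself, and only links \eqref{eq:thisShouldHold} to exactness of the iterates, is also accurate.
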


\begin{proof}
    The characteristic polynomial of $\Mb_{\Sc,\Sc}$ evaluates to
    $\prod_{i=1}^{|\Sc|} (\lambda - \lambda_i)$. According to the Cayley-Hamilton theorem, a matrix satisfies its own characteristic polynomial. Hence, we have
    \begin{equation}
\label{eq:CayleyHamilton}
 \prod_{i=1}^{|\Sc|} (\Mb_{\Sc,\Sc} - \lambda_i \Ib) = \zerob.
\end{equation}
Reindexing the product in~\eqref{eq:CayleyHamilton} according to~\eqref{eq:thisShouldHold} and multiplying the resulting equation with the factor 
$\prod_{i=i^\ast}^{i^\ast+|\Sc|-1} \alpha_i$, where the $\alpha_i$ are as in~\eqref{eq:alphaLambda}, yields \eqref{eq:thisShouldHold}. Notably, the applied factor is positive (and real) since each $\lambda_i$ is positive (and real) due to positive definiteness of $\Mb_{\Sc,\Sc}$ ensured by $\mathrm{rank}(\Gb_\Sc)=|\Sc|$.
\end{proof}

The combination of the previous results allows us to show that, under Assumption~\ref{assum:lastSteps}, we can exactly solve \eqref{eq:mpQP_dual} with the iterations \eqref{eq:solver_iteration} given tailored step sizes.

\begin{thm}\label{thm:exact_solution}
  Let Assumption~\ref{assum:lastSteps} hold. 
  Then, evaluating the $|\Sc|$ iterations~\eqref{eq:solver_iteration} following iteration $i^\ast$ with  
 step sizes as in \eqref{eq:alphaLambda} results in $\mub^{i^\ast+|\Sc|}=\hat{\mub}^\ast(\xb)$.
\end{thm}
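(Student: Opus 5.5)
The plan is to combine Lemma~\ref{lem:muiMuOpt} with Lemma~\ref{lem:choiceOfAlpha}, treating the components in $\Sc$ and in $\Sc^c$ separately. Throughout we take Assumption~\ref{assum:lastSteps} to hold for the step sizes~\eqref{eq:alphaLambda} actually used in the last $|\Sc|$ iterations. This is the reading under which the statement is meaningful, since the assumption concerns exactly those iterates.

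\emph{Components in $\Sc^c$.} Assumption~\ref{assum:lastSteps} gives $\mub^{i}_{\Sc^c}=\zerob$ for every $i\in\{i^\ast+1,\dots,i^\ast+|\Sc|\}$. In particular $\mub^{i^\ast+|\Sc|}_{\Sc^c}=\zerob$. This coincides with $\hat{\mub}^\ast_{\Sc^c}(\xb)=\zerob$ by the definition~\eqref{eq:muOptHat}, so this block is done.

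\emph{Components in $\Sc$.} Evaluate~\eqref{eq:lambdaSjLambdaSOptExplicit} from Lemma~\ref{lem:muiMuOpt} at $i:=i^\ast+|\Sc|-1$. This gives
\begin{equation*}
\mub^{i^\ast+|\Sc|}_\Sc-\hat{\mub}_\Sc^\ast(\xb)=\Big(\prod_{j=i^\ast}^{i^\ast+|\Sc|-1}\left(\Ib-\alpha_j\Mb_{\Sc,\Sc}\right)\Big)\big(\mub_\Sc^{i^\ast}-\hat{\mub}_\Sc^\ast(\xb)\big).
\end{equation*}
With the step sizes~\eqref{eq:alphaLambda}, Lemma~\ref{lem:choiceOfAlpha} states that the matrix product equals $\zerob$, i.e.,~\eqref{eq:thisShouldHold} holds. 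The left-hand side therefore vanishes for any $\mub_\Sc^{i^\ast}$, so $\mub^{i^\ast+|\Sc|}_\Sc=\hat{\mub}^\ast_\Sc(\xb)$. Together with the $\Sc^c$ block, this gives $\mub^{i^\ast+|\Sc|}=\hat{\mub}^\ast(\xb)$.

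The only point requiring care is the ordering of the factors in the product. All factors $\Ib-\alpha_j\Mb_{\Sc,\Sc}$ are polynomials in the same matrix $\Mb_{\Sc,\Sc}$, so they commute. Hence the reindexing in Lemma~\ref{lem:choiceOfAlpha}, and the correspondence between the ordered eigenvalues and the iteration indices, cause no difficulty. As a side remark, since $\xb\in\Rc_\Sc$, the vector $\hat{\mub}^\ast(\xb)$ is in fact an optimizer of~\eqref{eq:mpQP_dual}. So the theorem shows that exact optimality is reached in finitely many steps, which is consistent with Assumption~\ref{assum:lastSteps} requiring $\mub^{i^\ast+|\Sc|}_\Sc\geq\zerob$.
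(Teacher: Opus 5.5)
Your proof is correct and follows the paper's route. The paper's proof simply combines Lemmas~\ref{lem:simplifiedIterations}, \ref{lem:muiMuOpt}, and~\ref{lem:choiceOfAlpha}, so that the $\Sc$-block error is annihilated by the product that vanishes by Cayley--Hamilton. Two details the paper leaves implicit are handled well in your version: the explicit treatment of the $\Sc^c$ block via Assumption~\ref{assum:lastSteps}, and the remark that the factors $\Ib-\alpha_j\Mb_{\Sc,\Sc}$ commute, which justifies the paper's ``reindexing''.
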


\begin{proof}
    The statement follows from the combination of Lemmas \ref{lem:simplifiedIterations}, \ref{lem:muiMuOpt}, and \ref{lem:choiceOfAlpha}.
\end{proof}

Notably, the tailored step sizes are typically larger than the standard step size $\alpha_{\text{std}}$.

\begin{cor}\label{cor:optStepsLarger}
 Let Assumption~\ref{assum:lastSteps} hold and let the step sizes $\alpha_i$ be as in Lemma~\ref{lem:choiceOfAlpha}. Then, $\alpha_{\text{std}}\leq \alpha_{i^\ast} \leq \dots \leq \alpha_{i^\ast+|\Sc|-1}$. 
\end{cor}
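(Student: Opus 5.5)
The corollary combines two facts. Since the eigenvalues $\lambda_1\geq\dots\geq\lambda_{|\Sc|}$ of $\Mb_{\Sc,\Sc}$ are listed in descending order and are all positive, the step sizes $\alpha_i=1/\lambda_{i-i^\ast+1}$ from \eqref{eq:alphaLambda} are nondecreasing in $i$. It then remains to show $\alpha_{\text{std}}\leq\alpha_{i^\ast}$, which is equivalent to $\lambda_1=\lambda_{\max}(\Mb_{\Sc,\Sc})\leq\lambda_{\max}(\Mb)$.

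The plan proceeds in three steps.

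\emph{Positivity.} As noted in the proof of Lemma~\ref{lem:choiceOfAlpha}, $\Mb_{\Sc,\Sc}=\Gb_\Sc\Hb^{-1}\Gb_\Sc^\top$ is positive definite because $\mathrm{rank}(\Gb_\Sc)=|\Sc|$. Hence $\lambda_1\geq\lambda_2\geq\dots\geq\lambda_{|\Sc|}>0$.

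\emph{Monotonicity.} Inversion is order-reversing on the positive reals, so $1/\lambda_1\leq1/\lambda_2\leq\dots\leq1/\lambda_{|\Sc|}$. This gives $\alpha_{i^\ast}\leq\dots\leq\alpha_{i^\ast+|\Sc|-1}$.

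\emph{Comparison with $\alpha_{\text{std}}$.} Here I would use the Rayleigh quotient, which amounts to eigenvalue interlacing for principal submatrices. Let $\vb\in\R^{|\Sc|}$ be a unit eigenvector of $\Mb_{\Sc,\Sc}$ for $\lambda_1$. Embed it into $\R^q$ as $\tilde\vb$ by placing its entries at the indices in $\Sc$ and zeros at the indices in $\Sc^c$. Then $\norm{\tilde\vb}=1$ and
\[
\lambda_1=\vb^\top\Mb_{\Sc,\Sc}\vb=\tilde\vb^\top\Mb\tilde\vb\leq\max_{\norm{\yb}=1}\yb^\top\Mb\yb=\lambda_{\max}(\Mb),
\]
where the last equality uses that $\Mb$ is symmetric positive semi-definite. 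Since $\lambda_{\max}(\Mb)\geq\lambda_1>0$, the step size $\alpha_{\text{std}}$ is well defined, and we obtain $\alpha_{\text{std}}=1/\lambda_{\max}(\Mb)\leq1/\lambda_1=\alpha_{i^\ast}$.

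None of these steps poses a real obstacle. The only nontrivial ingredient is the principal-submatrix bound in the third step, and the Rayleigh-quotient embedding argument handles it directly.
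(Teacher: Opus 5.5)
Your proof is correct: the ordering $\alpha_{i^\ast}\leq\dots\leq\alpha_{i^\ast+|\Sc|-1}$ follows from the positive eigenvalues of $\Mb_{\Sc,\Sc}$ being listed in descending order. The bound $\alpha_{\text{std}}\leq\alpha_{i^\ast}$ follows from $\lambda_{\max}(\Mb_{\Sc,\Sc})\leq\lambda_{\max}(\Mb)$, which your zero-padding Rayleigh-quotient argument establishes. The paper states this corollary without an explicit proof, and your principal-submatrix (Cauchy interlacing) argument is the reasoning it implicitly relies on.
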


\section{Validity of assumptions}
\label{sec:validityAssumptions}

The derived step sizes in Section~\ref{sec:opt_steps} depend crucially on Assumption~\ref{assum:lastSteps}. Hence, its validity and applicability must be carefully assessed. Remarkably, if Assumption~\ref{assum:iteration_iStar} holds, the remaining conditions in Assumption~\ref{assum:lastSteps} can be compactly expressed in terms of $\mub^{i^\ast}_{\Sc}$.

\begin{lem}
\label{lem:conditionsForAssum2}
    Let Assumption~\ref{assum:iteration_iStar} hold. Then, Assumption~\ref{assum:lastSteps} holds if and only if $\mub^{i^\ast}_{\Sc}$ is such that
\begin{equation}
\label{eq:conditonsOnMuSiI}
        \left(\prod\limits_{j=i^\ast}^{i}\!\!\left(\Ib-\alpha_j \Mb_{\Sc,\Sc}\right)\!\right)\! (\mub_\Sc^{i^\ast}-\hat{\mub}_\Sc^\ast(\xb)) +\hat{\mub}_\Sc^\ast(\xb) \geq \zerob
\end{equation}
    and
    \begin{align}
    \nonumber
 \Mb_{{\Sc^c}\!,\Sc} \left(\prod\limits_{j=i^\ast}^{i-1}\!\!\!\!\!\left(\Ib-\alpha_j \Mb_{\Sc,\Sc}\right)\!\right)\!  (\mub_\Sc^{i^\ast}-\hat{\mub}_\Sc^\ast(\xb)) & \\
 \label{eq:conditonsOnMuSiII}
 + \Mb_{{\Sc^c}\!,\Sc} \hat{\mub}_\Sc^\ast(\xb) + \Nb_{\Sc^c} \xb + \db_{\Sc^c} & \geq \zerob
\end{align}
    for all $i \in \{i^\ast,\dots,i^\ast+|\Sc|-1\}$.
\end{lem}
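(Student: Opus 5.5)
We prove both directions by induction over the iterations $i\in\{i^\ast,\dots,i^\ast+|\Sc|-1\}$. The key observation is that the projection in~\eqref{eq:solver_iteration} acts componentwise: the conditions in Assumption~\ref{assum:lastSteps} amount to requiring that the unprojected update is nonnegative on $\Sc$ and nonpositive on $\Sc^c$. Nonpositivity on $\Sc^c$ forces $\max\{\cdot,0\}=0$ there. We then translate these sign conditions into \eqref{eq:conditonsOnMuSiI} and \eqref{eq:conditonsOnMuSiII} using the closed form from Lemma~\ref{lem:muiMuOpt}.

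\emph{Sufficiency.} Suppose \eqref{eq:conditonsOnMuSiI} and \eqref{eq:conditonsOnMuSiII} hold. We show inductively that $\mub^{k}_{\Sc^c}=\zerob$ and $\mub^{k}_\Sc\geq\zerob$ for $k=i^\ast,\dots,i^\ast+|\Sc|$. The base case $\mub^{i^\ast}_{\Sc^c}=\zerob$ is Assumption~\ref{assum:iteration_iStar}, and $\mub^{i^\ast}_\Sc\ge\zerob$ holds since every iterate is a projection onto $\Pc$ ($i^\ast>0$).

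For the step from $i$ to $i+1$, assume $\mub^i_{\Sc^c}=\zerob$. The $\Sc^c$-block of the gradient is then $\Mb_{\Sc^c,\Sc}\mub^i_\Sc+\Nb_{\Sc^c}\xb+\db_{\Sc^c}$. By the induction hypothesis, the recursion of Lemma~\ref{lem:simplifiedIterations} has held up to step $i$, so \eqref{eq:lambdaSiLambdaSOpt} gives
\[
\mub^i_\Sc=\Bigl(\prod_{j=i^\ast}^{i-1}(\Ib-\alpha_j\Mb_{\Sc,\Sc})\Bigr)(\mub^{i^\ast}_\Sc-\hat{\mub}^\ast_\Sc(\xb))+\hat{\mub}^\ast_\Sc(\xb),
\]
with the empty product equal to $\Ib$. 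Substituting this into the $\Sc^c$-block of the gradient yields exactly the left-hand side of \eqref{eq:conditonsOnMuSiII}, which is nonnegative. Hence $-\alpha_i(\cdot)\le\zerob$ on $\Sc^c$, and the projection yields $\mub^{i+1}_{\Sc^c}=\zerob$.

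On $\Sc$, the unprojected update is $(\Ib-\alpha_i\Mb_{\Sc,\Sc})\mub^i_\Sc-\alpha_i(\Nb_\Sc\xb+\db_\Sc)$. Using $\Nb_\Sc\xb+\db_\Sc=-\Mb_{\Sc,\Sc}\hat{\mub}^\ast_\Sc(\xb)$ from \eqref{eq:muOptHat}, this equals the left-hand side of \eqref{eq:conditonsOnMuSiI}, which is nonnegative. The projection is therefore inactive, and $\mub^{i+1}_\Sc$ is given by \eqref{eq:lambdaSi} and is $\ge\zerob$. This closes the induction and establishes Assumption~\ref{assum:lastSteps}.

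\emph{Necessity.} Assume Assumption~\ref{assum:lastSteps}. Then Lemmas~\ref{lem:simplifiedIterations}--\ref{lem:muiMuOpt} apply. Lemma~\ref{lem:muiMuOpt}, rearranged, states that $\mub^{i+1}_\Sc$ equals the left-hand side of \eqref{eq:conditonsOnMuSiI}, so \eqref{eq:conditonsOnMuSiI} is just $\mub^{i+1}_\Sc\geq\zerob$. For \eqref{eq:conditonsOnMuSiII}, note that $\mub^{i+1}_{\Sc^c}=\max\{-\alpha_i g_{\Sc^c},\zerob\}=\zerob$ with $\alpha_i>0$ forces $g_{\Sc^c}\ge\zerob$, where $g$ denotes the gradient at $\mub^i$. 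Since $\mub^i_{\Sc^c}=\zerob$, we have $g_{\Sc^c}=\Mb_{\Sc^c,\Sc}\mub^i_\Sc+\Nb_{\Sc^c}\xb+\db_{\Sc^c}$, and substituting the closed form of $\mub^i_\Sc$ (Lemma~\ref{lem:muiMuOpt} at index $i-1$, or trivially for $i=i^\ast$) yields \eqref{eq:conditonsOnMuSiII}.

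The main obstacle is the careful index bookkeeping. Condition \eqref{eq:conditonsOnMuSiII} at index $i$ concerns $\mub^i$ (product up to $i-1$) and governs $\mub^{i+1}_{\Sc^c}$, whereas \eqref{eq:conditonsOnMuSiI} concerns $\mub^{i+1}$. One must also note that the closed form of Lemma~\ref{lem:muiMuOpt} may only be used inside the induction, i.e., after the sign conditions at earlier steps have been secured. Finally, the step sizes $\alpha_j>0$ must be taken arbitrary positive (or as in~\eqref{eq:alphaLambda}) consistently on both sides.
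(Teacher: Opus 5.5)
Your proof is correct and follows essentially the paper's route: like the paper, you turn the componentwise sign behaviour of the projection into \eqref{eq:conditonsOnMuSiI} and \eqref{eq:conditonsOnMuSiII} via the closed form of Lemma~\ref{lem:muiMuOpt}, and you induct from the seed $\mub^{i^\ast}_{\Sc^c}=\zerob$ of Assumption~\ref{assum:iteration_iStar}. Your version is somewhat cleaner than the paper's: it treats the two directions separately, gets the index shift right (condition \eqref{eq:conditonsOnMuSiII} at index $i$ concerns $\mub^i$ and governs $\mub^{i+1}_{\Sc^c}$), and states explicitly that ``$\mub^i_\Sc\geq\zerob$'' in Assumption~\ref{assum:lastSteps} means ``the unprojected update on $\Sc$ is nonnegative,'' a reading the paper uses only implicitly in Lemma~\ref{lem:simplifiedIterations}.
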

\begin{proof}
 The conditions in~\eqref{eq:conditonsOnMuSiI} correspond to the conditions $\mub_\Sc^i \geq \zerob$ from Assumption~\ref{assum:lastSteps}. Noting this, \eqref{eq:conditonsOnMuSiI} then follows from~\eqref{eq:lambdaSjLambdaSOptExplicit}. Analogously, the conditions~\eqref{eq:conditonsOnMuSiII} shall ensure $\mub_{\Sc^c}^i=\zerob$. Assuming $\mub_{\Sc^c}^{i-1}=\zerob$ is given,  \eqref{eq:solver_iteration} implies that $\mub_{\Sc^c}^i=\zerob$ results whenever $\Mb_{\Sc^c,\Sc} \mub_\Sc^{i} + \Nb_{\Sc^c} \xb + \db_{\Sc^c} \geq \zerob$. Again, substituting $\mub_\Sc^i$ via \eqref{eq:lambdaSjLambdaSOptExplicit} yields \eqref{eq:conditonsOnMuSiII}. The claim then follows by induction with the seed $\mub_{\Sc^c}^{i^\ast}=\zerob$ from Assumption~\ref{assum:iteration_iStar}.
\end{proof} 
By using the KKT conditions \eqref{eq:KKT} we can
verify that \eqref{eq:conditonsOnMuSiI} and \eqref{eq:conditonsOnMuSiII} are satisfied for $\mub_\Sc^{i^\ast} = \hat{\mub}_\Sc^\ast(\xb)$. Consequently, given that $\mub_{\Sc^c}^{i^\ast} = \zerob$ by Assumption~\ref{assum:iteration_iStar}, Lemma~\ref{lem:conditionsForAssum2} establishes that Assumption~\ref{assum:lastSteps} holds for $\mub^{i^\ast} = \hat{\mub}^\ast(\xb)$. However, this relation is generally unlikely to hold.
Nevertheless, it is known that $\lim_{i \rightarrow \infty} \mub^{i}$ converges to an optimizer $\mub^\ast(\xb)$ for the standard choice $\alpha_i := \alpha_{\text{std}}$ (see, e.g. \cite{Boyd2004}). At this point, we recall a well-known fact on the uniqueness of $\mub^\ast(\xb)$.

\begin{cor}[cf.~{\cite[Sect.~4.1.1]{Bemporad2002}}]
    Consider a set $\Sc$ as in~\eqref{eq:conditionsOnS} and assume that the corresponding set $\Rc_\Sc$ as in \eqref{eq:setRS} is 
full-dimensional. Then, for every $\xb \in \mathrm{int}(\Rc_\Sc)$, the optimizer $\mub^\ast(\xb)$ is unique and equal to $\hat{\mub}^\ast(\xb)$.
\end{cor}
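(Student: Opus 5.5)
The plan is to show first that, for $\xb\in\mathrm{int}(\Rc_\Sc)$, the candidate $\hat{\mub}^\ast(\xb)$ satisfies the dual KKT conditions strictly, and then to show that any dual optimizer must coincide with it.

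\emph{Strict KKT at interior points.} Since $\Rc_\Sc$ is a full-dimensional polyhedron described by the affine inequalities in \eqref{eq:setRS}, every $\xb\in\mathrm{int}(\Rc_\Sc)$ satisfies all of them strictly (using the paper's definition of the interior with strict inequalities). This gives
\[
\hat{\mub}^\ast_\Sc(\xb)>\zerob \quad\text{and}\quad \Mb_{\Sc^c,\Sc}\hat{\mub}^\ast_\Sc(\xb)+\Nb_{\Sc^c}\xb+\db_{\Sc^c}>\zerob .
\]
By construction \eqref{eq:muOptHat}, $\Mb_{\Sc,\Sc}\hat{\mub}^\ast_\Sc(\xb)+\Nb_\Sc\xb+\db_\Sc=\zerob$. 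Hence $\hat{\mub}^\ast(\xb)$ satisfies the optimality conditions of \eqref{eq:mpQP_dual}: the gradient is zero on $\Sc$ and strictly positive on $\Sc^c$, where $\hat{\mub}^\ast$ vanishes. So $\hat{\mub}^\ast(\xb)$ is a dual optimizer.

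\emph{Uniqueness.} Let $\mub^\ast$ be any dual optimizer. The primal optimizer $\zb^\ast(\xb)$ is unique because $\Hb\succ 0$. The dual gradient equals the slack, $\Mb\mub+\Nb\xb+\db=\Eb\xb+\db-\Gb\zb$ with $\zb=-\Hb^{-1}(\Fb\xb+\Gb^\top\mub)$. For every optimizer this $\zb$ equals $\zb^\ast(\xb)$ via the KKT stationarity condition, so the dual gradient is the same for all optimizers. It follows that the gradient is strictly positive on $\Sc^c$ for every optimizer, and complementarity then forces $\mub^\ast_{\Sc^c}=\zerob$.

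The remaining components are then pinned down. Stationarity gives $\Gb_\Sc^\top\mub^\ast_\Sc=-\Hb\zb^\ast-\Fb\xb$, which is the same for all optimizers. Since $\mathrm{rank}(\Gb_\Sc)=|\Sc|$, the matrix $\Gb_\Sc^\top$ has full column rank, so $\mub^\ast_\Sc$ is unique and must equal $\hat{\mub}^\ast_\Sc(\xb)$.

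\emph{Main obstacle.} The delicate step is arguing that the dual gradient is identical across all optimizers. For this I would rely on the equivalent fact that $\Gb^\top\mub$ is invariant over the optimal set, which follows from uniqueness of $\zb^\ast$ together with stationarity.
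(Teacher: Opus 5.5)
Your proof is correct and follows the standard argument behind the result, which the paper does not prove but cites from Bemporad et al. At points of $\interior(\Rc_\Sc)$ the strict inequalities force $\mub^\ast_{\Sc^c}=\zerob$ through complementarity, and $\mathrm{rank}(\Gb_\Sc)=|\Sc|$ (LICQ) then pins down $\mub^\ast_\Sc$. The invariance of $\Gb^\top\mub$ that you flag as the main obstacle is fine: for any dual optimizer $\mub$, the point $-\Hb^{-1}(\Fb\xb+\Gb^\top\mub)$ satisfies the primal KKT conditions and therefore equals $\zb^\ast(\xb)$.

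Your reliance on the paper's strict-inequality definition of $\interior$ is essential, not cosmetic. With the topological interior the claim fails, e.g., when a constraint in $\Sc$ is duplicated by one in $\Sc^c$: the corresponding row in \eqref{eq:setRS} then vanishes identically, and the multiplier can be split arbitrarily between the two copies.
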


As a consequence, for every $\xb \in \mathrm{int}(\Rc_\Sc)$,  $\| \mub^{i^\ast} - \hat{\mub}^\ast(\xb) \|_2$ can be made arbitrarily small by choosing  $i^\ast$ sufficiently large. 
Moreover, for every $\xb \in \mathrm{int}(\Rc_\Sc)$, we obviously have $\hat{\mub}_\Sc^\ast(\xb)>\zerob$ and $\Mb_{\Sc^c\!,\Sc} \, \hat{\mub}_\Sc^\ast(\xb) + \Nb_{\Sc^c} \xb + \db_{\Sc^c} > \zerob $ by construction of $\Rc_\Sc$. 
Combining these observations, it follows that for every $\xb \in \mathrm{int}(\Rc_\Sc)$, the conditions \eqref{eq:conditonsOnMuSiI} and \eqref{eq:conditonsOnMuSiII} are satisfied for all sufficiently large  $i^\ast$.  

Unfortunately, we still need to satisfy $ \mub_{\Sc^c}^{i^\ast}= \zerob$ from Assumption~\ref{assum:iteration_iStar}, and this equality condition does not allow for any deviation. However, it will turn out that for the preceding iterate $\mub^{i^\ast-1}$, we may have $\mub_{\Sc^c}^{i^\ast-1} \neq \zerob$. In fact, according to~\eqref{eq:solver_iteration}, we obtain $\mub_{\Sc^c}^{i^\ast} = \zerob$ whenever 
\begin{equation}
    \label{eq:iMinusScCondition}
    \mub_{\Sc^c}^{i^\ast-1} - \alpha_{\text{std}} \big( \Mb_{\Sc^c} \mub^{i^\ast-1} + \Nb_{\Sc^c} \xb + \db_{\Sc^c} \big) \leq \zerob. 
\end{equation}
If we further restrict ourselves to $\mub^{i^\ast-1}$ yielding $\mub_{\Sc}^{i^\ast} \geq \zerob$, the additional condition
\begin{equation}
    \label{eq:iMinusS_Condition}
\mub_{\Sc}^{i^\ast-1} - \alpha_{\text{std}} \big( \Mb_{\Sc} \mub^{i^\ast-1}  + \Nb_\Sc \xb + \db_\Sc \big)  \geq \zerob
\end{equation}
results. Slightly reformulating the conditions results in the following statements.

\begin{lem}
\label{lem:Assum1holds}
    Consider a set $\Sc$ as in~\eqref{eq:conditionsOnS} and assume that the corresponding set $\Rc_\Sc$ as in \eqref{eq:setRS} is non-empty. Consider any $\xb \in \Rc_\Sc$ and any $i^\ast>0$. Then, Assumption~\ref{assum:iteration_iStar} holds iff 
   \begin{align}
   \nonumber
    \Big( \Mb_{\Sc^c} - \nicefrac{1}{\alpha_{\text{std}}}  \begin{pmatrix}
 \zerob & \Ib 
\end{pmatrix} \!\Big) (\mub^{i^\ast-1} -\hat{\mub}^\ast(\xb) ) &  \\
\label{eq:conditionAssum1}
    + \Mb_{\Sc^c,\Sc} \hat{\mub}_\Sc^\ast(\xb) +  \Nb_{\Sc^c} \xb + \db_{\Sc^c} &\geq \zerob. 
\end{align} 
Moreover, the relation 
\begin{equation}
\label{eq:toBeShown}
 \mub_\Sc^{i^\ast} -\hat{\mub}_\Sc^\ast(\xb)  =  \Big(\!\begin{pmatrix}
    \Ib & \zerob
\end{pmatrix} - \alpha_{\text{std}} \Mb_{\Sc} \Big)  \big( \mub^{i^\ast-1}  -  \hat{\mub}^\ast(\xb)  \big)   
\end{equation}
holds if and only if
\begin{equation}
\label{eq:toBeShown2}
\Big(\!\begin{pmatrix}
    \Ib & \zerob
\end{pmatrix} - \alpha_{\text{std}} \Mb_{\Sc} \Big)  \big( \mub^{i^\ast-1}  -  \hat{\mub}^\ast(\xb)  \big) + \hat{\mub}_\Sc^\ast(\xb) \geq \zerob.
\end{equation}
\end{lem}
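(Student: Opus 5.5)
The plan is to use the componentwise form of the iteration \eqref{eq:solver_iteration} with $\alpha_{i^\ast-1}=\alpha_{\text{std}}$, together with the identity $\Mb\hat{\mub}^\ast(\xb)+\Nb\xb+\db=\big(\zerob^\top,\ (\Mb_{\Sc^c\!,\Sc}\hat{\mub}_\Sc^\ast(\xb)+\Nb_{\Sc^c}\xb+\db_{\Sc^c})^\top\big)^\top$. This identity holds because \eqref{eq:muOptHat} gives $\Mb_{\Sc,\Sc}\hat{\mub}_\Sc^\ast(\xb)+\Nb_\Sc\xb+\db_\Sc=\zerob$ and $\hat{\mub}^\ast_{\Sc^c}(\xb)=\zerob$. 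Throughout, the components are ordered so that $\Sc$ comes first, so that $\begin{pmatrix}\Ib & \zerob\end{pmatrix}\mub=\mub_\Sc$ and $\begin{pmatrix}\zerob & \Ib\end{pmatrix}\mub=\mub_{\Sc^c}$. Here $\Mb_\Sc$ and $\Mb_{\Sc^c}$ denote the row blocks of $\Mb$.

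For the first equivalence, note that $\mub^{i^\ast}_{\Sc^c}=\max\{\mub^{i^\ast-1}_{\Sc^c}-\alpha_{\text{std}}(\Mb_{\Sc^c}\mub^{i^\ast-1}+\Nb_{\Sc^c}\xb+\db_{\Sc^c}),\zerob\}$ vanishes if and only if \eqref{eq:iMinusScCondition} holds. This is a genuine iff, since a componentwise max with zero is zero exactly when the first argument is nonpositive. Dividing \eqref{eq:iMinusScCondition} by $-\alpha_{\text{std}}<0$ flips the inequality and gives $\Mb_{\Sc^c}\mub^{i^\ast-1}+\Nb_{\Sc^c}\xb+\db_{\Sc^c}-\nicefrac{1}{\alpha_{\text{std}}}\begin{pmatrix}\zerob & \Ib\end{pmatrix}\mub^{i^\ast-1}\geq\zerob$. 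Now add and subtract $\hat{\mub}^\ast(\xb)$ inside. We use $\Mb_{\Sc^c}\hat{\mub}^\ast(\xb)=\Mb_{\Sc^c\!,\Sc}\hat{\mub}_\Sc^\ast(\xb)$ and $\begin{pmatrix}\zerob & \Ib\end{pmatrix}\hat{\mub}^\ast(\xb)=\zerob$. This yields exactly \eqref{eq:conditionAssum1}. The hypothesis $\xb\in\Rc_\Sc$ is only needed to make $\hat{\mub}^\ast(\xb)$ meaningful.

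For the second equivalence, define $\vb:=\mub_\Sc^{i^\ast-1}-\alpha_{\text{std}}(\Mb_\Sc\mub^{i^\ast-1}+\Nb_\Sc\xb+\db_\Sc)$, so that $\mub^{i^\ast}_\Sc=\max\{\vb,\zerob\}$. Using $\Mb_\Sc\hat{\mub}^\ast(\xb)+\Nb_\Sc\xb+\db_\Sc=\zerob$ and $\begin{pmatrix}\Ib&\zerob\end{pmatrix}\hat{\mub}^\ast(\xb)=\hat{\mub}^\ast_\Sc(\xb)$, we can rewrite $\vb-\hat{\mub}_\Sc^\ast(\xb)=\big(\begin{pmatrix}\Ib&\zerob\end{pmatrix}-\alpha_{\text{std}}\Mb_\Sc\big)(\mub^{i^\ast-1}-\hat{\mub}^\ast(\xb))$. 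Hence \eqref{eq:toBeShown} states precisely that $\max\{\vb,\zerob\}=\vb$, which holds iff $\vb\geq\zerob$. By the rewriting, $\vb\geq\zerob$ is exactly \eqref{eq:toBeShown2}.

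No step is a real obstacle. The main care point is keeping the block and ordering conventions consistent, and correctly reducing the gradient at $\hat{\mub}^\ast(\xb)$ on the $\Sc$ and $\Sc^c$ blocks.
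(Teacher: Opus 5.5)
Your proposal is correct and follows essentially the same route as the paper. The $\Sc^c$-block condition comes from rewriting \eqref{eq:iMinusScCondition} using $\Mb_{\Sc^c}\hat{\mub}^\ast(\xb)=\Mb_{\Sc^c\!,\Sc}\hat{\mub}_\Sc^\ast(\xb)$, and the $\Sc$-block statement from $\Mb_{\Sc,\Sc}\hat{\mub}_\Sc^\ast(\xb)+\Nb_\Sc\xb+\db_\Sc=\zerob$ together with the fact that the projection returns its argument exactly when that argument is nonnegative; your explicit use of $\max\{\vb,\zerob\}=\vb \iff \vb\geq\zerob$ covers both directions of the second equivalence more cleanly than the paper's terse write-up.
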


\begin{proof}
We first note that~\eqref{eq:iMinusScCondition} can be rewritten as
$$
\Mb_{\Sc^c} \big( \mub^{i^\ast\!-1} \!- \hat{\mub}^\ast(\xb) \big) + \Mb_{\Sc^c\!,\Sc} \hat{\mub}_\Sc^\ast(\xb)+ \Nb_{\Sc^c} \xb + \db_{\Sc^c} \!\geq \frac{\mub_{\Sc^c}^{i^\ast-1}}{\alpha_{\text{std}}}\!. 
$$
In fact, $\Mb_{\Sc^c,\Sc} \hat{\mub}_\Sc^\ast(\xb)-\Mb_{\Sc^c} \hat{\mub}^\ast(\xb) =\zerob$ due to $\hat{\mub}_{\Sc^c}^\ast(\xb)=\zerob$.
The final reformulation yielding~\eqref{eq:conditionAssum1} follows from basic manipulations. Regarding the second statement, we first note that satisfying~\eqref{eq:iMinusS_Condition} implies that the left-hand side of \eqref{eq:iMinusS_Condition} equals $\mub_\Sc^{i^\ast}$. Hence, given \eqref{eq:iMinusS_Condition}, we derive
\begin{equation}
\label{eq:almostThere}
   \mub_\Sc^{i^\ast}  =  \mub_\Sc^{i^\ast-1} - \alpha_{\text{std}} \Mb_{\Sc}  \big( \mub^{i^\ast-1}  -  \hat{\mub}^\ast(\xb)  \big) 
\end{equation}
by exploiting $-\Mb_{\Sc} \hat{\mub}_\Sc^\ast(\xb) = -\Mb_{\Sc,\Sc} \hat{\mub}_\Sc^\ast(\xb) = \Nb_\Sc \xb + \db_\Sc $. Now, \eqref{eq:almostThere} can be turned into \eqref{eq:toBeShown} by subtracting $\mub_\Sc^\ast(\xb)$ on both sides and using basic manipulations. Using a similar approach, we can finally show that the right-hand side of \eqref{eq:almostThere} is equivalent to~\eqref{eq:toBeShown2}.
\end{proof}

Obviously, the conditions~\eqref{eq:conditionAssum1} and \eqref{eq:toBeShown2} have a structure similar to that of \eqref{eq:conditonsOnMuSiII} and  \eqref{eq:conditonsOnMuSiI}, respectively. As a consequence, conditions~\eqref{eq:conditionAssum1} and \eqref{eq:toBeShown2} can be satisfied for every $\xb \in \mathrm{int}(\Rc_\Sc)$ by choosing $i^\ast$ sufficiently large, based on the same arguments as before. Given \eqref{eq:toBeShown2}, \eqref{eq:toBeShown} implies that small $\|\mub^{i^\ast-1} -\hat{\mub}^\ast(\xb) \|_2$ translate to small $\|\mub^{i^\ast} -\hat{\mub}^\ast(\xb) \|_2$. As a consequence, for every $\xb \in \mathrm{int}(\Rc_\Sc)$, the conditions \eqref{eq:conditonsOnMuSiI}, \eqref{eq:conditonsOnMuSiII}, \eqref{eq:conditionAssum1} and \eqref{eq:toBeShown2} can be satisfied simultaneously for sufficiently large $i^\ast$.  This allows us to state our central result.

\begin{thm}
\label{thm:centralResult}
   Consider a set $\Sc$ as in~\eqref{eq:conditionsOnS} and assume that the corresponding set $\Rc_\Sc$ in \eqref{eq:setRS} is 
full-dimensional. Then, for every $\xb \in \mathrm{int}(\Rc_\Sc)$, there exists an  $i^\ast>0$ such that $i^\ast$ iterations~\eqref{eq:solver_iteration} with the step size $\alpha_{\text{std}}$ followed by $|\Sc|$ iterations~\eqref{eq:solver_iteration} with step sizes as in \eqref{eq:alphaLambda} yield ${\mub^{i^\ast+|\Sc|}=\mub^\ast(\xb)}$.
\end{thm}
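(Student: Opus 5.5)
The plan is to reduce the statement to Theorem~\ref{thm:exact_solution} by showing that, for every $\xb \in \interior(\Rc_\Sc)$, Assumption~\ref{assum:lastSteps} holds for some sufficiently large $i^\ast$. Uniqueness of the dual optimizer on $\interior(\Rc_\Sc)$ then gives $\hat{\mub}^\ast(\xb)=\mub^\ast(\xb)$, which closes the argument. Throughout, I fix $\xb \in \interior(\Rc_\Sc)$ and write $\hat{\mub}^\ast:=\hat{\mub}^\ast(\xb)$.

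\emph{Step 1 (strict slack).} By the definition of $\Rc_\Sc$ in \eqref{eq:setRS}, interior points satisfy both $\hat{\mub}^\ast_\Sc > \zerob$ and $\Mb_{\Sc^c,\Sc}\hat{\mub}^\ast_\Sc + \Nb_{\Sc^c}\xb + \db_{\Sc^c} > \zerob$ strictly. Let $\varepsilon>0$ be the smallest component over both vectors.

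\emph{Step 2 (convergence).} By the uniqueness corollary, $\mub^\ast(\xb)=\hat{\mub}^\ast$. By convergence of projected gradient with $\alpha_{\text{std}}$, for every $\delta>0$ there is $k_0$ with $\|\mub^{k}-\hat{\mub}^\ast\|_2\le\delta$ for all $k\ge k_0$.

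\emph{Step 3 (linear bounds).} The left-hand sides of \eqref{eq:conditionAssum1}, \eqref{eq:toBeShown2}, \eqref{eq:conditonsOnMuSiI} and \eqref{eq:conditonsOnMuSiII} each have the form ``fixed matrix times a deviation, plus a vector whose entries are at least $\varepsilon$''. The deviation is either $\mub^{i^\ast-1}-\hat{\mub}^\ast$ or $\mub^{i^\ast}_\Sc-\hat{\mub}^\ast_\Sc$. The matrices involved are finitely many: $\Mb_{\Sc^c}-\nicefrac{1}{\alpha_{\text{std}}}(\zerob\ \Ib)$, $(\Ib\ \zerob)-\alpha_{\text{std}}\Mb_\Sc$, and the products $\prod_j(\Ib-\alpha_j\Mb_{\Sc,\Sc})$, premultiplied by $\Mb_{\Sc^c,\Sc}$ where needed. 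They are bounded by a constant $C$ that depends only on $\Sc$ and $\Mb$, not on $i^\ast$. This is because the $\alpha_j$ in \eqref{eq:alphaLambda} are fixed by the eigenvalues of $\Mb_{\Sc,\Sc}$.

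\emph{Step 4 (choice of $i^\ast$).} Choose $\delta<\varepsilon/C$ and set $i^\ast:=k_0+1$. This gives $\|\mub^{i^\ast-1}-\hat{\mub}^\ast\|_2\le\delta$. Since $\mub^{i^\ast}$ is an actual standard iterate, Step 2 also gives $\|\mub^{i^\ast}-\hat{\mub}^\ast\|_2\le\delta$, so \eqref{eq:toBeShown} is not strictly needed here.

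\emph{Step 5 (apply the lemmas).} With this $i^\ast$, \eqref{eq:conditionAssum1} holds, so Lemma~\ref{lem:Assum1holds} yields Assumption~\ref{assum:iteration_iStar}, i.e.\ $\mub^{i^\ast}_{\Sc^c}=\zerob$. Next, \eqref{eq:conditonsOnMuSiI} and \eqref{eq:conditonsOnMuSiII} hold for all $i\in\{i^\ast,\dots,i^\ast+|\Sc|-1\}$, so Lemma~\ref{lem:conditionsForAssum2} gives Assumption~\ref{assum:lastSteps}. Theorem~\ref{thm:exact_solution} then gives $\mub^{i^\ast+|\Sc|}=\hat{\mub}^\ast=\mub^\ast(\xb)$.

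The main obstacle is Step 5's use of Lemma~\ref{lem:Assum1holds}, where the exact equality $\mub^{i^\ast}_{\Sc^c}=\zerob$ must be obtained from a merely approximate iterate. Simply being close to the optimum would not give this equality. It comes from the strict slack: a component in $\Sc^c$ of the pre-projection vector in \eqref{eq:iMinusScCondition} equals $\mub^{i^\ast-1}_{\Sc^c}-\alpha_{\text{std}}(\cdot)$. Near $\hat{\mub}^\ast$ this is roughly $-\alpha_{\text{std}}$ times a quantity of size at least $\varepsilon$. It is therefore strictly negative, and the projection sets it exactly to zero. I would write this perturbation bound out carefully and keep the remaining verifications brief.
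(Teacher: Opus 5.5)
Your proposal is correct and follows essentially the paper's route: strict slack $\epsilon$ on $\interior(\Rc_\Sc)$, a uniform bound on the finitely many fixed linear maps (the step sizes \eqref{eq:alphaLambda} do not depend on $i^\ast$), a $\delta$-ball around $\hat{\mub}^\ast(\xb)$ reached by the convergent standard iterates, and then Lemmas~\ref{lem:Assum1holds} and \ref{lem:conditionsForAssum2}, Theorem~\ref{thm:exact_solution}, and uniqueness of $\mub^\ast(\xb)$. The only deviation is that you get $\|\mub^{i^\ast}-\hat{\mub}^\ast(\xb)\|_2\le\delta$ directly from convergence, since $\mub^{i^\ast}$ is itself a standard iterate; the paper instead pushes the ball around $\mub^{i^\ast-1}$ through the one-step relation \eqref{eq:toBeShown} and so must also verify \eqref{eq:toBeShown2}, and your shortcut is valid and slightly simpler.
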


\begin{proof}
For every $\xb \in \mathrm{int}(\Rc_\Sc)$, there exists an $\epsilon>0$ such that $\hat{\mub}_\Sc^\ast(\xb) \geq \epsilon \oneb$ and $\Mb_{{\Sc^c}\!,\Sc} \hat{\mub}_\Sc^\ast(\xb) + \Nb_{\Sc^c} \xb + \db_{\Sc^c}  \geq \epsilon \oneb$, where $\oneb$ denotes the vector of all ones. Furthermore, there exists a $\delta>0$ such that, for every $\Delta \mub \in \R^q$ with $\| \Delta \mub \|_2\leq \delta$, we have  
\begin{equation}
\label{eq:firstBlock}
\!\left\|  \left(\prod\limits_{j=i^\ast}^{i}\!\!\left(\Ib-\alpha_j \Mb_{\Sc,\Sc}\right)\!\right)\!\! \Big(\!\!\begin{pmatrix}
    \Ib & \!\zerob
\end{pmatrix} \!- \alpha_{\text{std}} \Mb_{\Sc} \Big)  \Delta \mub \right\|_\infty \!\!\!\!\!\leq \epsilon \!   
\end{equation}
for all $i \in \{i^\ast-1,\dots,i^\ast+|\Sc|-1\}$,  
$$
\left\| \Mb_{{\Sc^c}\!,\Sc} \!\left(\prod\limits_{j=i^\ast}^{i-1}\!\!\left(\Ib\!-\alpha_j \Mb_{\Sc,\Sc}\right)\!\!\right)\! \! \Big(\!\!\begin{pmatrix}
    \Ib & \!\zerob
\end{pmatrix} \! - \alpha_{\text{std}} \Mb_{\Sc} \Big)  \Delta \mub \right\|_\infty \!\!\!\!\!\!\leq \epsilon
$$
for all $i \in \{i^\ast,\dots,i^\ast+|\Sc|-1\}$, and
$$
\left\| \Big( \Mb_{\Sc^c} - \nicefrac{1}{\alpha_{\text{std}}}  \begin{pmatrix}
 \zerob & \Ib 
\end{pmatrix} \!\Big) \Delta \mub \right\|_\infty \leq \epsilon.
$$
In combination, we can ensure that the conditions \eqref{eq:conditonsOnMuSiI}, \eqref{eq:conditonsOnMuSiII}, \eqref{eq:conditionAssum1} and \eqref{eq:toBeShown2} hold for all $\mub^{i^\ast-1}$ satisfying 
\begin{equation}
\label{eq:conditionOnMu_iMinus1}
   \| \mub^{i^\ast-1} - \hat{\mub}^\ast(\xb) \|_2 \leq \delta, 
\end{equation}
 where we note that \eqref{eq:toBeShown2} is captured in~\eqref{eq:firstBlock} for $i:=i^\ast-1$. As previously discussed, satisfying \eqref{eq:conditionOnMu_iMinus1} is always possible for a sufficiently large choice of $i^\ast$.
Hence, the conditions in Lemmas~\ref{lem:conditionsForAssum2} and \ref{lem:Assum1holds} are satisfied, which implies that Assumption~\ref{assum:lastSteps} holds. As a consequence, Theorem~\ref{thm:exact_solution} applies, which completes the proof.
\end{proof}

We eventually investigate the structure of the optimal step sizes $\alpha_i$ as function of $\xb \in \Rc_\Sc$. To this end, we introduce the function $\varphib: \Fc \rightarrow \R^l$ defined as follows. For every $\xb \in \Fc \setminus \Rc_0$, we identify a set $\Sc$ and optimizer $\mub^\ast(\xb)$ satisfying~\eqref{eq:conditionsOnS} and \eqref{eq:zLambdaOptS}, and let $\lambda_1,\dots,\lambda_{|\Sc|}$ be as in Lemma~\ref{lem:choiceOfAlpha}. For every $\xb \in \Rc_0$, we set $\Sc:=\emptyset$. Then, we specify $\varphib_j(\xb)$ as  
$$
\varphib_j(\xb) := \left\{ \begin{array}{ll}
  1/\lambda_{j-l+|\Sc|}   & \text{if} \quad j > l - |\Sc|,   \\
  \alpha_{\text{std}}   & \text{otherwise}
\end{array}\right.
$$
for every $j \in \{1,\dots,l\}$,
where we note that $|\Sc|\leq l$ is guaranteed due to~\eqref{eq:conditionsOnS}. 
Lemma~\ref{lem:choiceOfAlpha} and Theorem~\ref{thm:centralResult} then imply the following structure.

\begin{cor}\label{cor:alpha_PWC}
The function $\varphib$ is piecewise constant (PWC) and uniquely defined almost everywhere on $\Fc$.
\end{cor}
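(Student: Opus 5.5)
The plan is to use the partition of $\Fc$ into $\Rc_0$ and the critical regions $\Rc_\Sc$ from \cite{Bemporad2002}. The key observation is that $\varphib(\xb)$ depends on $\xb$ only through the chosen index set $\Sc$. Indeed, by construction, $\varphib_j(\xb)$ is either $\alpha_{\text{std}}$ or $1/\lambda_{j-l+|\Sc|}$, where the $\lambda_k$ are the eigenvalues of $\Mb_{\Sc,\Sc}$ in descending order. These are determined by $\Sc$ alone and not by $\xb$. Hence, on any set where the same $\Sc$ is selected, $\varphib$ is constant.

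Concretely, I first set $\varphib\equiv\alpha_{\text{std}}\oneb$ on $\Rc_0$, where $\Sc=\emptyset$. Next, for every $\Sc$ satisfying \eqref{eq:conditionsOnS} with $\Rc_\Sc\neq\emptyset$, I note that any $\xb\in\Rc_\Sc$ admits this $\Sc$ as a valid choice. The value of $\varphib$ then equals the constant vector $\boldsymbol{c}_\Sc$ built from the spectrum of $\Mb_{\Sc,\Sc}$. Positivity and reality of these eigenvalues, and hence well-definedness of $\boldsymbol{c}_\Sc$, follow from $\mathrm{rank}(\Gb_\Sc)=|\Sc|$ as in the proof of Lemma~\ref{lem:choiceOfAlpha}. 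Since there are finitely many subsets $\Sc\subseteq\Qc$, and the polyhedra $\Rc_0,\Rc_\Sc$ cover $\Fc$, $\varphib$ takes finitely many values, each on a union of polyhedra. This gives the PWC property.

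For uniqueness almost everywhere, the plan has two parts. First, I discard the boundaries of all regions: each $\Rc_\Sc$ and $\Rc_0$ is a polyhedron, so its boundary has Lebesgue measure zero. Lower-dimensional (non-full-dimensional) regions are likewise null sets. The finite union of these null sets is null. It remains to show that for $\xb$ outside this null set, $\xb$ lies in the interior of some full-dimensional region, and that the selected $\Sc$, or at least the value $\boldsymbol{c}_\Sc$, is independent of the choices made in the definition.

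For $\xb\in\interior(\Rc_0)$, we have $\Nb\xb+\db>\zerob$, so $\mub^\ast(\xb)=\zerob$ is the only multiplier and no $\Sc$ with $|\Sc|\ge1$ can fit. Otherwise \eqref{eq:lambdaOpt} combined with $\mub^\ast=\zerob$ and strict complementarity would contradict $\Nb_\Sc\xb+\db_\Sc>\zerob$. For $\xb\in\interior(\Rc_\Sc)$ with $\Rc_\Sc$ full-dimensional, the corollary citing \cite[Sect.~4.1.1]{Bemporad2002} gives a unique optimizer $\mub^\ast(\xb)=\hat{\mub}^\ast(\xb)$ with $\hat{\mub}^\ast_\Sc(\xb)>\zerob$. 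Then any admissible $\Sc'$ must contain the support $\Sc$ of $\mub^\ast(\xb)$. Strict primal feasibility of the constraints in $\Sc^c$ (the second strict inequality in $\interior(\Rc_\Sc)$) forbids any index of $\Sc^c$ from being in $\Sc'$, because \eqref{eq:zLambdaOptS} forces the constraints in $\Sc'$ to be active. Hence $\Sc'=\Sc$, and $\varphib(\xb)$ is uniquely determined. Theorem~\ref{thm:centralResult} confirms that this unique value is precisely the step-size sequence that yields exact convergence on these interiors.

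The main obstacle is the second part: ensuring that the interiors of the full-dimensional regions, together with $\interior(\Rc_0)$, cover $\Fc$ up to a null set. Since $\Fc=\Rc_0\cup\bigcup_\Sc\Rc_\Sc$ is a finite union of polyhedra, any point not in a boundary or a lower-dimensional piece lies in some interior. Care is needed only because regions may overlap in degenerate cases. However, overlaps of distinct full-dimensional regions occur only on boundaries, as shown by the uniqueness argument just given, so they are absorbed in the null set.
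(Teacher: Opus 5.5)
Your route is the paper's: $\varphib$ depends on $\xb$ only through the selected $\Sc$, the regions $\Rc_0$ and $\Rc_\Sc$ cover $\Fc$, and almost-everywhere uniqueness comes from the regions having disjoint interiors. The paper simply cites this disjointness as known. You derive it, and the derivation is correct: if $\hat{\mub}^\ast_\Sc(\xb)>\zerob$ and the second inequality in \eqref{eq:setRS} holds strictly, then uniqueness of $\mub^\ast(\xb)$ gives $\Sc\subseteq\Sc'$. Strict primal feasibility on $\Sc^c$, together with the activity forced by \eqref{eq:zLambdaOptS}, gives $\Sc'\subseteq\Sc$. (Your separate treatment of $\interior(\Rc_0)$ is unnecessary, since $\Sc:=\emptyset$ is imposed on all of $\Rc_0$ by definition.)

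The gap is in your last paragraph. Your uniqueness argument works on the strict-inequality interior (the paper's $\interior$). Your measure-zero covering argument, however, produces points of the \emph{topological} interior. For a full-dimensional $\Rc_\Sc$ the two coincide only if none of the affine functions defining $\Rc_\Sc$ in \eqref{eq:setRS} vanishes identically. If one does vanish, which happens when a constraint is weakly active on a full-dimensional set, the strict interior is empty. Then your claim that overlaps of distinct full-dimensional regions occur only on boundaries is false.

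Concretely, take $n=1$, $l=2$, $\Hb=\Ib$, objective $\frac12\zb^\top\zb+x\zb_1$, and constraints $\frac12\zb_2\le 0$, $\zb_1\le -1$. Then $\Mb=\mathrm{diag}(1/4,1)$, $\alpha_{\text{std}}=1$, and $\Rc_0=\{x\geq 1\}$. For every $x<1$ the multiplier $\mub^\ast(x)=(0,1-x)^\top$ is unique, yet both $\Sc=\{2\}$ and $\Sc=\{1,2\}$ satisfy \eqref{eq:conditionsOnS} and \eqref{eq:zLambdaOptS}, with $\Rc_{\{2\}}=\Rc_{\{1,2\}}=\{x\le 1\}$. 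These choices give $\varphib=(1,1)^\top$ and $\varphib=(1,4)^\top$, respectively, on a set of positive measure.

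So the statement needs a non-degeneracy assumption, e.g., that no defining affine function of a full-dimensional $\Rc_\Sc$ vanishes identically. Under it, topological and strict interiors agree and your argument is complete. The paper's proof relies on the same unstated assumption when it passes from ``pairwise disjoint interiors'' to ``a unique $\Sc$ for almost every $\xb$''. This is therefore not a step the paper supplies and you missed, but the justification you give for it (``as shown by the uniqueness argument just given'') does not hold.
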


\begin{proof}
  As briefly mentioned in Section 2,  the union of all possible sets $\Rc_\Sc$ and $\Rc_0$ coincides with $\Fc$. For different index sets $\Sc_1$ and $\Sc_2$, the corresponding sets $\Rc_{\Sc_1}$ and $\Rc_{\Sc_2}$ are known to have pairwise disjoint interiors. Hence, for almost every $\xb \in \Fc$, there exists a unique $\Sc$ such that $\xb \in \Rc_\Sc$ and this set is equal for every $\xb \in \mathrm{int}(\Rc_\Sc)$. The observation establishes the PWC structure of $\varphib$ and implies that $\varphib$ is uniquely defined in the interior of every $\Rc_\Sc$.
\end{proof}

In summary, the obtained results are, to the best of the authors' knowledge, the first to show that projected gradient descent solvers with variable step sizes can solve mpQPs exactly. Moreover, we establish for the first time the functional form of the required optimal step sizes. We will now use the insights from this section to construct NNs tailored to solving mpQPs.

\section{Learning optimal step sizes}\label{sec:NNs_for_DPGD}
\begin{figure*}[t]
    \centering
    \begin{subfigure}[t]{0.29\textwidth}
        \centering
        \caption{NN for approximating the step sizes}
        \vspace{2mm}
        \label{fig:NN_for_alpha}
        \input{neural_net}
    \end{subfigure}
    \begin{subfigure}[t]{0.69\textwidth}
        \centering
        \caption{NN structure for evaluating the dual projected gradient descent}
        \vspace{2mm}
        \label{fig:grad_steps}
        \begin{tikzpicture}[scale=1.5,>=latex]
  \newcommand{\lo}{1}
  \newcommand{\loI}{2*\lo}
  \newcommand{\loII}{3*\lo}
  \newcommand{\loi}{0.5*\lo}
  \newcommand{\loii}{0.33*\lo}
  \def\offset{0.2}  
  \newcommand{\cornerarrowhorizontal}[2]{\draw[-stealth,thick](#1)-| ($ (#1)!0.5!(#2) $)|- (#2);}
  \newcommand{\cornerarrowvertical  }[2]{\draw[-stealth,thick](#1)|- ($ (#1)!0.5!(#2) $)-| (#2);}
  \newcommand{\sys}{Grad. step}
  \newcommand{\relu}{$\gb^{i}_{\text{ReLU}}$}
  \begin{scope}[shift= {(0,0)}]
      \node[draw,behind path,anchor=south] (ac)    at (1*\lo ,0.7*\loi) {$\Phib_1\approx\alpha_{\text{std}}$};
      \node[draw,behind path] (sys1)  at (1*\lo  ,0) {\sys};
      \node[draw,behind path,anchor=north] (relu0) at (1*\lo  ,-0.7*\loi) {\relu};
      
      \node[draw,behind path] (sys3)  at (3.25*\lo,0) {\sys};
      \node[draw,behind path,anchor=north] (relu1) at (3.25*\lo  ,-0.7*\loi) {\relu};
      \node[draw,behind path,anchor=south] (ai)    at (3.25*\lo,0.7*\loi) {$\Phib_{i-i^\ast+2}\approx1/\lambda_{i-i^\ast+1}$};
    
      \node[draw, behind path] (lamc0)at (0.1*\lo  ,-0.9\lo) {$\mub^{0}= \zerob$};
      \node[draw, behind path] (lamck)at (1.9*\lo  ,-0.9\lo) {$\mub^{i}$};
      \node[draw, behind path] (lami) at (4.1*\lo  ,-0.9\lo) {$\mub^{i}$};
      \node[draw, behind path] (lamNm)at (6.0*\lo  ,-\loi){$\mub^{i^{\ast}+l}\approx\mub^\ast(\xb)$};
    
  \end{scope}
  
  \draw[-stealth,thick] (ac.south) -- (sys1.north);
  \draw[-stealth,thick] (lamc0.north) |- (sys1.west);
  \draw[-stealth,thick] (sys1.south) -- (relu0.north);
  \draw[-stealth,thick] (relu0.south) |- (lamck.west);
  \draw[-stealth,thick] (lamck.north) |- (sys1.east) node[midway,anchor=south]{$i<i^{\ast}$};
  \draw[-stealth,thick] (lamck.north) |- (sys3.west) node[midway,anchor=north west]{$i=i^{\ast}$};
  \draw[-stealth,thick] (ai.south) -- (sys3.north);
  \draw[-stealth,thick] (sys3.south) -- (relu1.north);
  \draw[-stealth,thick] (relu1.south) |- (lami.west);
  \draw[-stealth,thick] (lami.north) |- (sys3.east)node[midway,anchor=south west]{$i< i^{\ast} +l$};
  \draw[-stealth,thick] (lami.north) |- (lamNm.west)node[midway,anchor=south west]{$i= i^{\ast} +l$};  
  
\end{tikzpicture}
    \end{subfigure}
    \caption{Neural network $\Phib(\xb)$ for approximating the optimal step sizes \eqref{eq:alphaLambda} in (a). In (b), we illustrate the implementation of the dual projected gradient descent solver using ReLU activations and the approximated step sizes from $\Phib(\xb)$. The \enquote{Grad. step} boxes stand for an unprojected gradient step $\yb^{i} - \alpha \left( \Mb \yb^{i} + \Nb \xb + \db \right)$ with a step size $\alpha$ given by the output of the NN $\Phib(\xb)$ from (a).
    }
    \label{fig:NNs}
\end{figure*}
In this section, we will introduce and investigate NNs that are suitable for representing a dual projected gradient descent (DPGD) solver, which, according to Theorem \ref{thm:centralResult}, allows an exact computation of the optimizer of \eqref{eq:mpQP_dual}. In order to find such a representation, we need NNs that represent the optimal step sizes \eqref{eq:alphaLambda} which are, according to Corollary~\ref{cor:alpha_PWC}, PWC functions. For learning such functions, NNs with step activations would be the natural choice. However, the step function is rarely used in practice due to problems with backpropagation-based training caused by gradients with a value of zero. Therefore, we will use the sigmoid activation \eqref{eq:sig} as a proxy for the step activation. The NN $\Phib(\xb,\thetab)$ representing the variable step sizes is trained by minimizing the quadratic cost
\begin{equation}\label{eq:NN_cost}
    \frac{1}{D}\sum\limits_{j=1}^{D}\norm{\mub^{\ast}(\xb^{j})-\mub^{k}}_2^2.
\end{equation}
for a data set $\Nc:=\{(\xb^{i},\sb^{i}) \ | \ \sb^{i}=\mub^\ast(\xb^i) \ \forall i \in \{1,\dots,D\} \}$ containing samples of the dual QP \eqref{eq:mpQP_dual}. The iterate $\mub^{k}$ is computed by evaluating \eqref{eq:solver_iteration} until $i=i^\ast+l-1=:k$ with $\mub^0=\zerob$ for $\xb^j$ and step sizes given by
\begin{equation}\label{eq:alpha_NN}
    \alpha_i\!=\!\left\{\!\!\!\begin{array}{ccc}
             \Phib_1(\xb,\thetab) & \text{if} & i \in \{0,\dots,i^\ast-1\}  \\
             \Phib_{i-i^\ast+2}(\xb,\thetab) & \text{if} & i \in \{i^\ast,\dots,i^\ast+l-1\}.
        \end{array}\right.
\end{equation}
From Theorem \ref{thm:centralResult} and Corollary \ref{cor:alpha_PWC}, we know that for such a DPGD solver, there exist PWC step sizes $\alpha_i=\alpha_i^\ast$ and an $i^\ast\in\N$ such that $\mub^{k}(\xb)=\mub^\ast(\xb)$ for $k=i^\ast+|\Sc|$. Thus, for a sufficiently large $i^\ast$ and a sigmoid NN $\Phib(\xb,\thetab)$ as proxy for a NN with step activation, we should be able to get arbitrarily close to the optimum $\mub^\ast (\xb)$ with $\mub^{k}(\xb)$. The topology of the considered sigmoid NN is shown in Figure~\ref{fig:NN_for_alpha}. As apparent from Theorem~\ref{thm:centralResult} for computing the optimizer of the QP \eqref{eq:mpQP_dual}, we need at most $i^\ast+|\Sc|\leq i^\ast+l$ step sizes. However, since the optimal step sizes for $i<i^\ast$ are constant, i.e., $\alpha_i=\alpha_{\text{std}}$ for all $i\in\{0,\dots,i^\ast-1\}$, we only need an output dimension of $1+l$ for the NN from Figure~\ref{fig:NN_for_alpha}.   
\begin{rem}
    The solver iterations \eqref{eq:solver_iteration}, which incorporate variable step sizes determined by the NN $\Phib(\xb)$ as required in \eqref{eq:NN_cost}, can be realized using ReLU activations. The forward pass of this specific architecture is illustrated in Figure~\ref{fig:grad_steps}. This implementation is particularly advantageous as it allows the evaluation of the cost function \eqref{eq:NN_cost} and the computation of its gradients, which are essential for training $\Phib(\xb)$, within established machine learning frameworks, e.g., TensorFlow \cite{tensorflow2015-whitepaper}, PyTorch \cite{PyTorch}, scikit-learn \cite{scikit-learn}.
\end{rem}     

\subsection{Case study: Learning-enhanced solver}\label{sec:NNs_for_DPGD:A}

In this case study, we aim to investigate the performance of a learning-enhanced DPGD solver with a topology as in Figure~\ref{fig:NNs}. For the case study, we consider the dual OP of an OCP as in, e.g., \cite[Eq.~(4)]{Teichrib2023CDC}, with $N=2$, $\Qb=\Ib$, $\Rb=0.1$, $\Xc=\{\xb\in\R^2 \ | \ |\xb_1|\leq25,|\xb_2|\leq5\}$, and $\Uc=\{u\in\R \ | \ |u|<1\}$ for a discrete-time double integrator \cite[Eq. (2.8)]{Gutman1987} described by 
\begin{equation*}
    \xb(k+1)=\begin{pmatrix}
        1 & 1 \\
        0 & 1
    \end{pmatrix}\xb(k)+\begin{pmatrix}
        0.5 \\
        1
    \end{pmatrix}\ub(k).
\end{equation*}
Given the specified parameters of the OCP, we can derive the matrices $\Hb\in\R^{2\times2}$, $\Fb\in\R^{2\times2}$, $\Gb\in\R^{16\times2}$, $\Eb\in\R^{16\times2}$, and $\db\in\R^{16\times1}$ of the corresponding dual QP \eqref{eq:mpQP_dual}. This results here in a dual QP with $\xb\in\R^2$ and $\mub\in\R^{16}$, where we have $\zb\in\R^l$ with $l=2$ for the primal QP. 

\begin{figure}[t]
    \centering
    \begin{subfigure}[ht]{0.45\textwidth}
        \centering
        \begin{tikzpicture}
  \begin{axis}[
    width=\textwidth,
    height = 0.5*\textwidth,
    axis x line=bottom,
    axis y line=left, 
    xlabel= $i^{\ast}$,
    xtick = {2,4,6,8,10,12,14,16,18},
    ylabel= $\|\mub^{\ast}-\mub^{i^{\ast}+l}\|_{2}$,
    xlabel style = {font=\small},
    ylabel style = {font=\small},
    ymax = 149,
    ymin = 0,
    legend style={{at=(1.02,1.02),anchor=north east},font=\tiny}
  ]
    \addplot[no marks,color=orange, very thick] coordinates {
        (0,150.00)
        (1,83.14)
        (2,52.40)
        (3,43.08)
        (4,38.91)
        (5,35.78)
        (6,32.85)
        (7,30.10)
        (8,27.72)
        (9,25.79)
        (10,24.30)
        (11,23.17)
        (12,22.26)
        (13,21.49)
        (14,20.80)
        (15,20.16)
        (16,19.57)
        (17,19.03)
        (18,18.54)
        (19,18.09)};
    \addlegendentry{Optimal steps after $i^{\ast}$}
    \addplot[no marks,color=black,dashed, very thick] coordinates {
        (0,76.96)
        (1,75.24)
        (2,73.68)
        (3,72.24)
        (4,70.88)
        (5,69.59)
        (6,68.36)
        (7,67.18)
        (8,66.04)
        (9,64.95)
        (10,63.90)
        (11,62.88)
        (12,61.89)
        (13,60.94)
        (14,60.01)
        (15,59.10)
        (16,58.22)
        (17,57.37)
        (18,56.53)
        (19,55.71)};
    \addlegendentry{Constant steps after $i^{\ast}$}
    \end{axis}
\end{tikzpicture}
    \end{subfigure}
    \vspace{-6mm}
    \caption{For each $i^\ast$ the figure shows the distance of $\mub^{i^\ast+l}$ to the optimum averaged over 231 randomly chosen $\xb$ that have not been used during training. The iterate $\mub^k$ is computed by the DPGD solver with different step sizes. The dashed line shows the results when using a standard DPGD solver with $\mub^0=\zerob$ and constant step sizes $\alpha_i=\alpha_{\text{std}}$ for all $i\in\{0,\dots,i^\ast+l-1\}$. The solid line shows the results obtained with the learning-enhanced DPGD solver using the learned step sizes \eqref{eq:alpha_NN}.}
    \label{fig:step_evaluations}
\end{figure}

For the NN $\Phib(\xb)$ approximating the $l+1=3$ step sizes, shown in Figure~\ref{fig:NN_for_alpha}, we choose a NN structure with two hidden layers.
First, a sigmoid layer and then a softmax layer before postactivation. This final layer ensures that, before postactivation, only one of the output neurons is approximately $1$ and all others are approximately $0$ for a given input $\xb$.
The actual values of the PWC step sizes are then represented by the parameters of the postactivation. The described NN topology is well-established for classification tasks \cite{Bishop2006}. 
Since the approximation of the PWC step sizes (cf. Corollary~\ref{cor:alpha_PWC}) by the NN $\Phib(\xb)$ can be interpreted as a classification problem, this topology is the natural choice here.

We trained the NN by minimizing the cost \eqref{eq:NN_cost} with $k=i^\ast+l=12$ using the AdamW \cite{loshchilov2019decoupledweightdecayregularization} optimizer with an initial learning rate of 0.02 and learning rate scheduling. The data set $\Nc$ for training is generated by sampling the dual QP on a regular grid with $50$ sampling steps in each dimension of the parameter space, i.e., for $\xb_1$ and $\xb_2$, where we only kept the samples for which the dual QP is feasible. 
This results in a data set with 2314 samples. After training the NN, we evaluated the learning-enhanced DPGD solver as illustrated in Figure~\ref{fig:grad_steps} for different values of $i^\ast$. The results are summarized in Figure~\ref{fig:step_evaluations},
where we show the average value of $||\mub^\ast-\mub^k||_2$ after $k=i^\ast+l=i^\ast+2$ DPGD steps for 231 randomly chosen $\xb$ that have not been used during training. We compared the proposed learning-enhanced DPGD solver shown in Figure~\ref{fig:NNs}, where the step sizes are given by \eqref{eq:alpha_NN} with a standard DPGD solver with constant step sizes $\alpha_i=\alpha_{\text{std}}$ for all $i\in\{0,\dots,i^{\ast}+l-1\}$. Moreover, in Figure~\ref{fig:opt_alphas} we show the optimal step sizes \eqref{eq:alphaLambda}, required for exactly solving the dual QP \eqref{eq:mpQP_dual}, and the step sizes learned by the NN $\Phib(\xb)$ for some samples $\xb=\xb^j$. We can make several interesting observations based on the figures.

\begin{figure}[t]
    \centering
    \begin{subfigure}[t]{0.26\textwidth}
        \centering
        \vspace{2mm}
        \label{fig:alpha0}
        \input{alpha_0}
    \end{subfigure}
    \hspace{-8.5mm}
    \begin{subfigure}[t]{0.26\textwidth}
        \centering
        \vspace{2mm}
        \label{fig:alpha1}
        \input{alpha_1}
    \end{subfigure}
    \vspace{-2mm}
    \caption{The optimal PWC step sizes $1/\lambda_{i-i^\ast+1}$ according to \eqref{eq:alphaLambda} are illustrated for $i=i^\ast$ in the left figure and $i=i^\ast+1$ in the right figure. The predictions by the NN $\Phib_{i}(\xb^j)$ for different validation samples $\xb^j$ are represented by the $+$ symbols in (a) and (b) for $i=i^\ast$ and $i=i^\ast+1$, respectively.}
    \label{fig:opt_alphas}
\end{figure}

First, we can clearly see in Figure~\ref{fig:opt_alphas} that, as stated in Corollary~\ref{cor:alpha_PWC}, the optimal step sizes are PWC functions. Moreover, although we do not directly enforce the learning of the PWC optimal step sizes \eqref{eq:alphaLambda}, since we only minimize \eqref{eq:NN_cost} during training, the NN still learns the proposed PWC optimal step sizes. This results in larger step sizes than the classical approach with constant step sizes, which is consistent with Corollary~\ref{cor:optStepsLarger}. Here, $\alpha_{\text{std}}=0.413$, while the optimal step sizes, resulting from the eigenvalues of $\Mb_{\Sc\Sc}$, are between $1.24$ and $22.39$. Moreover, we observe in Figure~\ref{fig:step_evaluations} that even for small $i^\ast\geq2$ and thus for a small number of solver iterations, the proposed learning-enhanced DPGD solver yields better results. 
The higher approximation error for $i^\ast<2$ may result from the fact that, as discussed in Section~\ref{sec:validityAssumptions}, for small $i^\ast$ it is more likely that Assumption~\ref{assum:lastSteps}, which is crucial for the results from Section~\ref{sec:opt_steps} to hold, does not hold. Thus, in this case, we may not be able to compute $\mub^\ast$ exactly by the DPGD solver. However, as apparent from Figure~\ref{fig:step_evaluations}, this undesired effect diminishes already for small $i^\ast$. For the case study considered, the break-even point at which the proposed method outperforms a standard DPGD solver with constant step sizes is already at $i^\ast=2$. We further observe in the experiments that the fraction of samples $\mub^{i^\ast}(\xb^j)$ satisfying Assumption~\ref{assum:lastSteps} increases rapidly with growing $i^\ast$. For $i^\ast=2$ only $25\%$ of the samples satisfy the assumption whereas for $i^\ast=10$ already over $90\%$ of the samples satisfy Assumption~\ref{assum:lastSteps}. Meaning that for $i^\ast=10$ we can compute the exact optimal solution $\mub^\ast(\xb^l)$ for $90\%$ of the samples $\xb^l$ by using the PWC step size \eqref{eq:alphaLambda}.     

\section{Conclusion and summary}
Based on a rigorous theoretical analysis of DPGD solvers for mpQPs in Section~\ref{sec:opt_steps} and Section~\ref{sec:validityAssumptions}, we derived NN topologies that allow us to enhance the standard DPGD solver by approximating optimal parameter-dependent step sizes. The theoretical analysis reveals several new insights into DPGD solvers for mpQPs. We showed that the dual of mpQPs and thereby the mpQPs themselves can be solved exactly by these solvers when choosing parameter-dependent optimal step sizes. Moreover, we showed that these optimal step sizes are PWC functions of the mpQPs parameter.  
In a case study, we further showed that the derived NN topologies are not just theoretically sound but also outperform standard DPGD solvers with constant step sizes. 

\bibliographystyle{unsrt}        

\begin{thebibliography}{10}

\bibitem{Rawlings2017}
J.~B. Rawlings, D.~Q. Mayne, and M.~M. Diehl.
\newblock {\em Model Predictive Control: Theory, Computation, and Design}.
\newblock Nob Hill Publishing, 2nd edition edition, 2017.

\bibitem{Bemporad2002}
A.~Bemporad, M.~Morari, V.~Dua, and E.~N. Pistikopoulos.
\newblock The explicit linear quadratic regulator for constrained systems.
\newblock {\em Automatica}, 38(1):3--20, 2002.

\bibitem{Chen2018}
S.~{Chen}, K.~{Saulnier}, N.~{Atanasov}, D.~D. {Lee}, V.~{Kumar}, G.~J. {Pappas}, and M.~{Morari}.
\newblock Approximating explicit model predictive control using constrained neural networks.
\newblock In {\em Proc. 2018 American Control Conference}, pages 1520--1527, 2018.

\bibitem{Karg2020}
B.~{Karg} and S.~{Lucia}.
\newblock Efficient representation and approximation of model predictive control laws via deep learning.
\newblock {\em IEEE Transactions on Cybernetics}, 50(9):3866--3878, 2020.

\bibitem{Fabiani2022}
F.~Fabiani and P.J. Goulart.
\newblock Reliably-stabilizing piecewise-affine neural network controllers.
\newblock {\em IEEE Transactions on Automatic Control}, pages 1--15, 2022.

\bibitem{Teichrib2024CDC}
D.~Teichrib and M.~Schulze~Darup.
\newblock Reachability analysis for piecewise affine systems with neural network-based controllers.
\newblock In {\em Proc. of the 2024 Conference on Decision and Control}, pages 894--900, 2024.

\bibitem{Bhardwaj2020}
M.~Bhardwaj, S.~Choudhury, and B.~Boots.
\newblock Blending mpc \& value function approximation for efficient reinforcement learning.
\newblock arXiv:2012.05909v1 [cs.LG], 2020.

\bibitem{Zhong2013}
M.~{Zhong}, M.~{Johnson}, Y.~{Tassa}, T.~{Erez}, and E.~{Todorov}.
\newblock Value function approximation and model predictive control.
\newblock In {\em 2013 IEEE Symposium on Adaptive Dynamic Programming and Reinforcement Learning}, pages 100--107, 2013.

\bibitem{Chen2021}
S.~Chen, M.~Fazlyab, M.~Morari, G.~J. Pappas, and V.~M. Preciado.
\newblock Learning lyapunov functions for hybrid systems.
\newblock In {\em Proc. of the International Conference on Hybrid Systems: Computation and Control}. Association for Computing Machinery, 2021.

\bibitem{Ichnowski2021}
J.~Ichnowski, P.~Jain, B.~Stellato, G.~Banjac, M.~Luo, F.~Borrelli, J.E. Gonzalez, I.~Stoica, and K.~Goldberg.
\newblock Accelerating quadratic optimization with reinforcement learning.
\newblock In {\em Proc. of the 2021 International Conference on Neural Information Processing Systems}, NIPS '21, Red Hook, NY, USA, 2021. Curran Associates Inc.

\bibitem{Luken2024}
L.~L\"uken and S.~Lucia.
\newblock Learning iterative solvers for accurate and fast nonlinear model predictive control via unsupervised training.
\newblock In {\em Proc. of the 2024 European Control Conference}, pages 1843--1850, 2024.

\bibitem{Chen2022}
T.~Chen, X.~Chen, W.~Chen, Z.~Wang, H.~Heaton, J.~Liu, and W.~Yin.
\newblock Learning to optimize: a primer and a benchmark.
\newblock {\em J. Mach. Learn. Res.}, 23(1), January 2022.

\bibitem{kotary2024}
J.~Kotary and F.~Fioretto.
\newblock Learning constrained optimization with deep augmented lagrangian methods, 2024.

\bibitem{Luken2025}
L.~Lüken and S.~Lucia.
\newblock Self-supervised learning of iterative solvers for constrained optimization, 2025.

\bibitem{Teichrib2023}
D.~Teichrib and M.~Schulze~Darup.
\newblock Error bounds for maxout neural network approximations of model predictive control.
\newblock {\em IFAC-PapersOnLine}, 56(2):10113--10119, 2023.
\newblock Proc. of the 2023 IFAC World Congress.

\bibitem{Teichrib2025OJCSYS}
D.~Teichrib and M.~Schulze~Darup.
\newblock On the representation of piecewise quadratic functions by neural networks.
\newblock {\em IEEE Open Journal of Control Systems}, 4:447--462, 2025.

\bibitem{Boyd2004}
S.~Boyd and L.~Vandenberghe.
\newblock {\em Convex Optimization}.
\newblock Cambridge University Press, 2004.

\bibitem{tensorflow2015-whitepaper}
M.~Abadi et~al.
\newblock {TensorFlow}: Large-scale machine learning on heterogeneous systems, 2015.
\newblock Software available from tensorflow.org.

\bibitem{PyTorch}
A.~Paszke et~al.
\newblock {\em PyTorch: an imperative style, high-performance deep learning library}.
\newblock Curran Associates Inc., Red Hook, NY, USA, 2019.

\bibitem{scikit-learn}
F.~Pedregosa et~al.
\newblock Scikit-learn: Machine learning in {P}ython.
\newblock {\em Journal of Machine Learning Research}, 12:2825--2830, 2011.

\bibitem{Teichrib2023CDC}
D.~Teichrib and M.~Schulze~Darup.
\newblock Efficient computation of lipschitz constants for mpc with symmetries.
\newblock In {\em Proc- of the 2023 IEEE Conference on Decision and Control}, pages 6685--6691, 2023.

\bibitem{Gutman1987}
P.-O. Gutman and M.~Cwikel.
\newblock An algorithm to find maximal state constraint sets for discrete-time linear dynamical systems with bounded controls and states.
\newblock {\em IEEE Transactions on Automatic Control}, 32(3):251--254, 1987.

\bibitem{Bishop2006}
Christopher~M. Bishop.
\newblock {\em Pattern Recognition and Machine Learning (Information Science and Statistics)}.
\newblock Springer-Verlag, Berlin, Heidelberg, 2006.

\bibitem{loshchilov2019decoupledweightdecayregularization}
I.~Loshchilov and F.~Hutter.
\newblock Decoupled weight decay regularization, 2019.

\end{thebibliography}

\end{document}